\newtheorem{theorem}{Theorem}[section]
\newtheorem{lemma}[theorem]{Lemma}
\newtheorem{corollary}[theorem]{Corollary}
\newtheorem{conjecture}[theorem]{Conjecture}
\begin{document}
	
\title{
	Path-monochromatic bounded depth rooted trees in (random) tournaments 
}
	
\author{
	Raphael Yuster
	\thanks{Department of Mathematics, University of Haifa, Haifa 3498838, Israel. Email: raphael.yuster@gmail.com\,.}
}
	
\date{}
	
\maketitle
	
\setcounter{page}{1}
	
\begin{abstract}
	An edge-colored rooted directed tree (aka arborescence) is path-monochromatic if every path in it is monochromatic.
	Let $k,\ell$ be positive integers. For a tournament $T$, let $f_T(k)$ be the largest integer such that
	every $k$-edge coloring of $T$ has a path-monochromatic subtree with at least $f_T(k)$ vertices and let
	$f_T(k,\ell)$ be the restriction to subtrees of depth at most $\ell$.
	It was proved by Landau that $f_T(1,2)=n$ and proved by Sands et al. that $f_T(2)=n$ where $|V(T)|=n$.
	Here we consider $f_T(k)$ and $f_T(k,\ell)$ in more generality, determine their extremal values in most cases, and in fact in all cases assuming the Caccetta-H\"aggkvist Conjecture. We also study the typical value of $f_T(k)$ and $f_T(k,\ell)$, i.e., when $T$ is a random tournament. 

\end{abstract}

\section{Introduction}

In this paper we mainly consider oriented graphs, which are digraphs without loops, digons, or parallel edges.
In particular, we consider tournaments, which are oriented complete graphs.
An  {\em arborescence} is an oriented tree with a designated root,
such that for any vertex in the tree, there is a directed path in the tree from the root to that vertex.
The {\em depth} of an arborescence is the length of a longest directed path in it.
An edge-colored arborescence is {\em path-monochromatic} if every directed path in it is monochromatic.
A vertex $u$ of an edge-colored digraph {\em monochromatically dominates} a vertex $v$ if there is
a monochromatic directed path from $u$ to $v$. 
Hence, the root of a path-monochromatic arborescence monochromatically dominates all vertices of the arborescence.
Solving a longstanding conjecture of Erd\H{o}s, Sands, Sauer, and Woodrow mentioned in \cite{SSW-1982}, it was proved by
Bousquet, Lochet, and Thomass{\'e} \cite{BLT-2019} that there is a function $s(k)$, such that the vertices of every $k$-edge colored tournament can be covered using at most $s(k)$ path-monochromatic arborescences;
an exponential lower bound for $s(k)$ was obtained in \cite{BDH-2019}. Unavoidably, some of the covering
arborescences may have large depth. For example, it was shown in \cite{SSW-1982} that $s(2)=1$ but it is not difficult to construct examples of $2$-edge colorings of arbitrary large $n$-vertex tournaments in which every spanning path-monochromatic arborescence is a Hamiltonian path (i.e., has depth $n-1$).
On the other hand, it was already observed by Landau \cite{landau-1953} that every tournament has a spanning arborescence of depth $2$ (whose root is called a {\em king}).

Given the above, how large can a path-monochromatic arborescence of depth at most $\ell$ be in a $k$-edge colored graph? More formally, let $k,\ell$ be positive integers. For a tournament $T$, let $f_T(k,\ell)$ be the largest integer such that every $k$-edge coloring of $T$ has a path-monochromatic arborescence of depth at most $\ell$ with at least $f_T(k,\ell)$ vertices.
If the depth is unrestricted, we denote the parameter by $f_T(k)$.
Clearly, $f_T(k,1)$ is just one larger the maximum out-degree of $T$, hence always at least $\lceil (n+1)/2 \rceil$ where $n=|V(T)|$. By Landau's observation mentioned above, we have that $f_T(1,2)=n$ (using a king as the root). As the cases $k=1$ or $\ell=1$ are settled, we shall assume hereafter that $k \ge 2$
and $\ell \ge 2$. By the result in \cite{SSW-1982} mentioned earlier, we have $f_T(2)=n$
and there are cases for which $f_T(2,n-2) < n$. Hence, starting with $k=2$, it is of interest to
determine $f_T(k,\ell)$ for every $\ell \ge 2$. Naturally, we shall be interested in extremal or typical cases.

As for the extremal case, let $f(k,\ell,n)$ and $f(k,n)$ be the minimum of $f_T(k,\ell)$ (resp. $f_T(k)$) taken over all $n$-vertex tournaments,
let $f(k,\ell)= \lim_{n \rightarrow \infty} f(k,\ell,n)/n$ (we will prove that the limit exists in most cases and that it exists in all cases assuming the Caccetta-H\"aggkvist Conjecture).
Respectively define $f(k)=\lim_{n \rightarrow \infty} f(k,n)/n$ (so recall from the above that $f(2)=1$).

As for the typical case, let $f^*(k,\ell,n)$ and $f^*(k,n)$ be the random variable $f_T(k,\ell)$
(resp. $f_T(k)$) where $T$ is a random $n$-vertex tournament.
It is worth noting that properties of all edge colorings of random tournaments were studied by several researchers. We mention here a related variant of $f^*(2,n)$ where instead of looking for path-monochromatic arborescences,
one searches for the longest possible monochromatic path in every $2$-edge coloring of a random tournament.
This problem was essentially resolved by Buci\'c, Letzter, and Sudakov \cite{BLS-2019}; they proved that with high probability, there is always such a path of length $\Omega(n/\sqrt{\log n})$, resolving a conjecture in \cite{BKS-2012}. This result was recently extended in \cite{BHLS-2020} from paths to arbitrary oriented trees which are not necessarily an arborescences.

\vspace*{3mm}
Our first result concerns $f(k,\ell)$ and $f(k)$. Before stating it, we recall the longstanding
Caccetta-H\"aggkvist Conjecture \cite{CH-1978,sullivan-2006} (hereafter CH for short):

\begin{conjecture}[Caccetta-H\"aggkvist \cite{CH-1978}]
If an $n$-vertex digraph has minimum out-degree at least $r$, then it has
a directed cycle of length at most $\lceil n/r \rceil$.
\end{conjecture}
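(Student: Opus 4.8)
\emph{The final displayed statement is the Caccetta--H\"aggkvist Conjecture \cite{CH-1978}, open since 1978; I do not have a proof of it, and can only outline the approaches behind the known partial results and indicate where the difficulty lies.}

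The plan is to first reduce to the ``girth $3$'' instance: every $n$-vertex digraph with minimum out-degree at least $n/3$ contains a directed triangle. This is the most-studied case and, as discussed in Sullivan's survey \cite{sullivan-2006}, the natural reduction target --- partial results about it transfer, with only additive losses in the bound $\lceil n/r\rceil$, to the general conjecture, via a blow-up/re-weighting argument that turns a shortest cycle in the original digraph into a triangle in an auxiliary digraph of the appropriate out-degree density. So I would aim all of the effort at the implication: minimum out-degree at least $n/3$ forces a directed triangle.

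For the triangle case the natural tool is a global density/counting argument. In a triangle-free digraph, every directed path of length two $u\to v\to w$ forbids the arc $w\to u$; a double count of such paths against the non-arcs of the digraph already yields a statement of the form ``minimum out-degree $\ge \beta n$ forces a triangle'' for some constant $\beta$, and incorporating sharper local densities --- on configurations with $4$, $5$, $6$ vertices --- pushes $\beta$ down. This is the line followed by Caccetta--H\"aggkvist, Shen, Hamburger--Haxell--Kostochka, Chudnovsky--Seymour--Sullivan, and, via the flag-algebra method, Hladk\'y--Kr\'al'--Norin, who reached $\beta\approx 0.3465$. One concrete plan is therefore to run the flag-algebra semidefinite program for triangle-free digraphs at a higher level and hope its optimum descends all the way to $1/3$ --- equivalently, to exhibit a sum-of-squares certificate proving the bound $1/3$ exactly.

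The main obstacle is precisely that last step, and it is believed to be a genuine one: the conjectured extremal configurations --- iterated blow-ups of short directed cycles and related ``pentagon''-type constructions --- are of the kind that local-density and flag-algebra arguments at any fixed level are not expected to resolve, so a new idea seems to be needed. The most plausible route I see toward a full proof is a global \emph{symmetrization}: the conjecture is already known for vertex-transitive digraphs (Hamidoune, by additive-combinatorics arguments in the Cayley case) and for small $r$, so one would want to reduce an arbitrary near-extremal digraph to such a symmetric situation while losing at most an additive constant in the out-degree. Making such a reduction rigorous --- or otherwise exploiting the rigid structure that near-extremality should force --- is exactly where I expect to get stuck, and it is where the conjecture has resisted for over four decades.
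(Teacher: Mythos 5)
You are right not to attempt a proof here: the statement is the Caccetta--H\"aggkvist Conjecture, which the paper itself does not prove --- it is stated as a longstanding open conjecture \cite{CH-1978,sullivan-2006} and is only ever used as a hypothesis (``assuming CH'') in Theorem \ref{t:1}(ii) and Lemma \ref{l:ch}. Your assessment of its status and of the known partial results is accurate, and your mention of Shen's bound \cite{shen-1998} is in fact exactly the ingredient the paper substitutes for CH to obtain the unconditional lower bound $f(2,2)\ge \sqrt{7}-2$ in Corollary \ref{coro:f22}; so declining to claim a proof is the correct response, and there is nothing in the paper to compare your outline against.
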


\begin{theorem}\label{t:1}
	$ $\\
	(i) $f(2,2)$ exists and furthermore, $f(2,2) \in [\sqrt{7}-2\,,\,\frac{2}{3}]=[0.654...,0.666...]$.\\
	(ii) Assuming CH, $f(2,\ell)$ exists for all $\ell \ge 2$ and equals $\frac{\ell}{\ell+1}$.\\
	(iii) $f(k)$ exists for all $k \ge 3$ and equals $\frac{1}{2}$.
\end{theorem}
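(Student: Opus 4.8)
The plan rests on a reformulation via monochromatic bounded-length reachability. For a vertex $v$ of a $k$-edge-coloured tournament and a colour $c$, let $R^{\le\ell}_c(v)$ be the set of vertices reachable from $v$ along a monochromatic colour-$c$ directed path of length at most $\ell$ (and $R_c(v)=R^{\le\infty}_c(v)$). Since every root-to-vertex path in a path-monochromatic arborescence of depth $\le\ell$ is a monochromatic path of length $\le\ell$, any such arborescence rooted at $v$ lies inside $\{v\}\cup\bigcup_c R^{\le\ell}_c(v)$; conversely, a single colour-$c$ breadth-first search from $v$, truncated at depth $\ell$, is already a path-monochromatic depth-$\le\ell$ arborescence spanning $\{v\}\cup R^{\le\ell}_c(v)$, and one can glue such trees over the colours (with care at shared vertices) to build large path-monochromatic arborescences. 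So, essentially, $f_T(k,\ell)$ is $\max_v\bigl|\{v\}\cup\bigcup_c R^{\le\ell}_c(v)\bigr|$, and similarly for $\ell=\infty$. Thus upper bounds on $f$ amount to designing colourings that keep all bounded-length reach sets small, and lower bounds to showing that some vertex must have a large such set.

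Part (iii) is the cleanest. The lower bound $f(k)\ge\tfrac12$ is immediate: a maximum-out-degree vertex together with its out-neighbourhood is a depth-$1$ (hence trivially path-monochromatic) arborescence on $\ge\lceil(n+1)/2\rceil$ vertices. For the upper bound it suffices to handle $k=3$, because a $3$-colouring is a valid $k$-colouring for every $k\ge3$, so $f(k,n)\le f(3,n)$. I would take $T$ to be the $t$-fold balanced blow-up of the directed triangle: for $n=3^t$ write $V=V_0\cup V_1\cup V_2$ with $|V_i|=n/3$, orient all edges $V_0\to V_1\to V_2\to V_0$, colour these three classes of between-edges with the three distinct colours, and recurse identically inside each $V_i$. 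For $v\in V_0$ a colour-$1$ path from $v$ can only spill into $V_1$ and then stalls (the between-edges leaving $V_1$ carry colour $2$), while colour-$2$ and colour-$3$ paths from $v$ never leave $V_0$; hence $\bigcup_c R_c(v)=V_1\cup\bigcup_c R^{V_0}_c(v)$, and symmetrically for $V_1,V_2$. Writing $g(n)=\max_v|\bigcup_c R_c(v)|$ this gives $g(n)\le n/3+g(n/3)$, so $g(n)\le n/2+O(1)$ and $f_T(3)\le n/2+O(1)$. Together with the lower bound this yields $f(3,n)=(1/2+o(1))n$, hence $f(k)$ exists and equals $\tfrac12$ for all $k\ge3$. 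I expect this part to be routine once the reformulation is in place; the role of $k\ge3$ is exactly that with two colours two of the three between-classes would share a colour, reviving a $2$-step wraparound (consistent with $f(2)=1$).

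For the lower bound in (ii) (assuming CH), start from the Sands--Sauer--Woodrow theorem $f(2)=1$: some vertex $v^*$ monochromatically dominates $V$, so $V=\{v^*\}\cup R_1(v^*)\cup R_2(v^*)$. The heart of the matter is to show that, for a suitably chosen dominator, all but $(1/(\ell+1)+o(1))n$ vertices can be reached from it inside depth $\ell$ by monochromatic paths; this is where CH enters. The intended mechanism: if too many vertices were ``far'' (monochromatic distance $>\ell$), one of the two colour digraphs would contain a subdigraph whose relevant out-degrees all exceed $n/(\ell+1)$ yet which has no directed cycle of length $\le\ell+1$ --- equivalently a long monochromatic path structure that can be shortcut to bring those vertices within depth $\ell$ --- contradicting CH. Making this precise (identifying the right monochromatic subdigraph, the right root, and reconciling the two colours into one depth-$\le\ell$ arborescence without losing more than $o(n)$ vertices) is one of the two technical crux points. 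For part (i), the unconditional bound $f(2,2)\ge\sqrt7-2$ should come from a CH-free variant: a maximum-out-degree vertex of relative out-degree $d$ already gives a depth-$1$ arborescence of $\approx dn$ vertices, while if $d$ is small one can squeeze a competing depth-$2$ estimate out of the monochromatic dominator that improves as $d$ decreases; balancing the two forces $d$ to the positive root of $d^2+4d-3=0$, i.e.\ $\sqrt7-2$.

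The other crux is the matching upper bound $f(2,\ell)\le\frac{\ell}{\ell+1}$, \emph{unconditional}: a $2$-colouring of a tournament in which every vertex has $\bigl|\{v\}\cup R^{\le\ell}_1(v)\cup R^{\le\ell}_2(v)\bigr|\le(\tfrac{\ell}{\ell+1}+o(1))n$. I expect this to be the hardest step. Blow-ups of vertex-transitive (e.g.\ rotational) tournaments fail because some colour class necessarily carries ``long'' arcs whose $2$-step reach already wraps around to cover everything; and a naive cyclic blow-up into $\ell+1$ equal parts with a recursive colouring inside the parts fails because, for the extremal vertex inside a part, the within-part reach and the between-part reach add up past the target. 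The construction that works must break symmetry more carefully --- plausibly a recursion with unequal part sizes (or more parts), together with a colour pattern engineered so that in each part the vertices with large internal reach are precisely those with small external reach --- and checking that the recursion closes exactly at $\frac{\ell}{\ell+1}$ is the delicate verification. Finally, existence of the limits: for $f(k)$ ($k\ge3$), and under CH for $f(2,\ell)$, the matching bounds above already pin down the value; for $f(2,2)$ unconditionally one shows $f(2,2,n)$ is non-decreasing with unit increments (via vertex addition and deletion) and that a blow-up product yields a sub-multiplicative-type inequality $f(2,2,ab)\lesssim(f(2,2,a)-1)b+f(2,2,b)$, which together force $\limsup_n f(2,2,n)/n=\liminf_n f(2,2,n)/n$.
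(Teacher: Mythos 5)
Your proposal leaves unproved exactly the two steps that constitute the theorem, and the sketches you give for them do not work as stated. For the lower bound in (ii), aiming CH at ``one of the two colour digraphs'' is the wrong target: there is no reason a single colour class has large out-degree where you need it, and you offer no way to reconcile the two colours into one bounded-depth arborescence. The paper's argument applies CH to a different object: the auxiliary digraph $F$ on $V(T)$ with an edge $(u,v)$ whenever $u$ cannot reach $v$ by a monochromatic path of length at most $\ell$. If some vertex has $F$-out-degree below $\lceil n/(\ell+1)\rceil$ you are done; otherwise CH gives a directed cycle of length at most $\ell+1$ in $F$, and applying Sands--Sauer--Woodrow to the subtournament on the at most $\ell+1$ cycle vertices yields a vertex reaching all the others there by monochromatic paths of length at most $\ell$, contradicting the $F$-edge to its cycle successor. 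The unconditional $\sqrt{7}-2$ in (i) is the same argument with Shen's theorem (minimum out-degree at least $(3-\sqrt{7})n$ forces a directed triangle) substituted for CH; your ``balance two estimates at the root of $d^2+4d-3=0$'' names the right number but gives no mechanism and never explains where $\sqrt{7}$ would come from without Shen's result. Likewise the matching upper bound $\frac{\ell}{\ell+1}$, which you leave as a guess about unequal parts, is realized by lexicographic powers of the $(\ell+2)$-vertex Sands--Sauer--Woodrow tournament $R_{\ell+2}$ (red Hamiltonian path, blue back edges) with the product colouring: from block $i$ one cannot monochromatically reach block $i-1$ at all (nor block $\ell+2$ from block $1$ within length $\ell$), a cross-block path cannot mix red and blue outer edges, and the within-block reach recurses, giving exactly $(1+\ell n)/(\ell+1)$; its $\ell=2$ case also supplies the $\frac23$ in (i). Without these ingredients, (i) and (ii) are not proved.

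Two further points. In (iii) your recursive triangle blow-up is the paper's power $C^r$ of the $3$-coloured triangle and the recursion $g(n)\le n/3+g(n/3)$ is correct, but you only treat $n=3^t$; monotonicity of $f(k,n)$ does not let you interpolate an \emph{upper} bound between consecutive powers of $3$ (you lose a factor $3$), so you need either nearly equal parts for general $n$ or the paper's device of a random $n$-vertex subtournament of $C^r$. Your route to the existence of $f(2,2)$ (monotonicity with unit increments plus a product inequality of the form $f_{A\circ B}(2,2)\le (f_B(2,2)-1)|A|+f_A(2,2)$ for the product colouring) is viable and is essentially the paper's induction step repackaged --- the paper instead powers one near-extremal tournament and interpolates with random subtournaments --- but the product inequality itself still has to be proved. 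Finally, your opening reformulation $f_T(k,\ell)=\max_v|\{v\}\cup\bigcup_c R^{\le\ell}_c(v)|$ is only clear at depth $2$, where every intermediate vertex of a monochromatic length-$2$ path is an out-neighbour of the root through an edge of that same colour, so the colour trees glue without conflict; for $\ell\ge 3$ two colours can compete for the same intermediate vertex and the union can strictly exceed every depth-$\ell$ arborescence rooted at that vertex, so the glueing direction needs an argument wherever you rely on it for lower bounds.
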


We next consider $f^*(k,\ell,n)$ and $f^*(k,n)$. Recall that ``with high probability'' (whp for short) means a probability that tends to $1$ as $n$ tends to infinity.
We separate our results to two: the case $k=2$ and the case $k \ge 3$.
For the latter, our next result shows that for all $k \ge 3$, $f^*(k,2,n)$ is whp significantly larger than $f(k,n)$
(which is about $n/2$ by Theorem \ref{t:1} item (iii)).
In other words, while there are $k$-edge colorings of certain tournaments in which path-monochromatic arborescences (of any depth) have at most about $n/2$ vertices, whp every $k$-edge coloring of a random tournament has significantly more than $n/2$ vertices even if restricted to depth $2$.
On  the other hand, we will show that $f^*(k,n)$
approaches $n/2$ with high probability, exponentially fast in $k$. In other words, as the number of colors increases, random tournaments, even without depth restriction, achieve a constant fraction more than $1/2$
but that constant approaches zero rapidly.

\begin{theorem}\label{t:2}
	For every $k \ge 3$ (a) there exists a positive constant $c_k$ such that whp $f^*(k,2,n) \ge (c_k+\frac{1}{2})n$. On the other hand, (b) whp:
	\begin{equation}\label{e:t2}
	\frac{f^*(k,n)}{n} - o(1) \le \begin{cases}
		\frac{1}{2} + \frac{1}{4\cdot 3^{k/3-1}}   &  \text{for}\ k \equiv 0 \bmod 3\,;\\
		\frac{1}{2} + \frac{1}{2\cdot 3^{(k-1)/3}} &  \text{for}\ k \equiv 1 \bmod 3\,;\\
		\frac{1}{2} + \frac{1}{8\cdot 3^{(k-5)/3}} &  \text{for}\ k \equiv 2 \bmod 3\,.
	\end{cases}
\end{equation}
\end{theorem}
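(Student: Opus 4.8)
My plan is to exhibit, for the random $T$, an explicit $k$-colouring and to bound, for every vertex $v$, the set $\mathcal R(v)$ of vertices reachable from $v$ by a monochromatic directed path; since every non‑root vertex of a path‑monochromatic arborescence rooted at $v$ lies in $\mathcal R(v)$, a uniform bound $|\mathcal R(v)|\le (g(k)+o(1))n$ gives $f_T(k)\le (g(k)+o(1))n$. The colouring is recursive: given a parameter $t\in\{2,3\}$, split the vertex set into $t$ almost‑equal blocks $V_1,\dots,V_t$, reserve $t$ fresh colours $c_1,\dots,c_t$, colour every edge leaving $V_a$ for another block with colour $c_a$, and colour the edges inside each block recursively, reusing the same remaining $k-t$ colours in every block. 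Three facts drive the analysis: (i) a colour‑$c_a$ monochromatic path has length at most $1$, because after one cross‑edge it sits in some $V_b$ with $b\neq a$ and no colour‑$c_a$ edge ever leaves $V_b$; (ii) a path in one of the ``inside'' colours that starts in $V_a$ stays in $V_a$, since leaving a block forces a cross colour; (iii) a root $v\in V_a$ has out‑edges in only one cross colour, namely $c_a$. Hence $\mathcal R(v)\subseteq V_a\cup N^+_{c_a}(v)$ with $N^+_{c_a}(v)\subseteq N^+(v)\cap(V\setminus V_a)$. Using that $T$ is whp almost regular and that, for the canonical block partition, every $v$ sends roughly half of its out‑edges into $V\setminus V_a$ (a Chernoff estimate plus a union bound over the $O(k)$ recursion levels and over all vertices), one gets $|\mathcal R(v)|\le(\tfrac{t-1}{2t}+o(1))n+(g(k-t)+o(1))\tfrac nt$, that is $g(k)\le\tfrac{t-1}{2t}+\tfrac1t\,g(k-t)$, with base value $g(1)=1$ (one colour: a king spans everything); blocks shrunk below, say, $\log^2 n$ vertices contribute negligibly and need only the trivial bound.

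\textbf{Part (b), finishing.} It remains to pick, for each $k$, the best schedule of $t$'s. Both updates $g\mapsto\tfrac14+\tfrac g2$ and $g\mapsto\tfrac13+\tfrac g3$ fix $\tfrac12$ and contract towards it; writing $g-\tfrac12\mapsto\beta_t(g-\tfrac12)$ with $\beta_2=\tfrac12,\ \beta_3=\tfrac13$, the final value depends only on the multiset of $t$'s through $\prod_i\beta_{t_i}$, and since $\beta_3^{1/3}<\beta_2^{1/2}$ the optimum removes colours in triples ($t=3$) as far down as possible and uses one $t=2$ step (when $k\equiv0\bmod3$) or two of them (when $k\equiv2\bmod3$) to absorb $k-1\bmod3$, always ending at $g(1)$. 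Unwinding this from $g(1)=1$ yields exactly $g(3m+1)=\tfrac12+\tfrac1{2\cdot3^{m}}$, $g(3m)=\tfrac12+\tfrac1{4\cdot3^{m-1}}$ and $g(3m+2)=\tfrac12+\tfrac1{8\cdot3^{m-1}}$, the three cases of \eqref{e:t2}.

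\textbf{Part (a): depth $2$ beats $n/2$.} Fix an arbitrary $k$‑colouring. For a candidate root $v$ I would take the depth‑$2$ arborescence whose first level is all of $N^+(v)$, each edge $vu$ keeping its colour $c(vu)$, and whose second level is $Q(v):=\{w\in N^-(v):\exists u\in N^+(v)\text{ with }u\to w\text{ and }c(uw)=c(vu)\}$; it has $1+d^+(v)+|Q(v)|$ vertices. Since $\sum_v d^+(v)=\binom n2$, it suffices to prove $\sum_v|Q(v)|\ge c_k n^2$ for some constant $c_k=c_k(k)>0$, uniformly over all colourings (whp over $T$). Now $w\notin Q(v)$ exactly when every midpoint $u$ of the directed triangle $v\to u\to w\to v$ has $c(vu)\neq c(uw)$; whp each such triangle has $(\tfrac14+o(1))n$ midpoints, so the task reduces to ruling out that an adversary makes all but $o(n^3)$ of the cyclic triangles ``rainbow‑like'' in this sense — equivalently, that a (pseudorandom) tournament admits a $k$‑colouring under which almost every cyclic triangle is rainbow. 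This is the crux, and I expect it to be the main obstacle: a purely local per‑vertex count is not enough, since the out‑edges and in‑edges at a single vertex can be coloured with disjoint colour sets and impose no constraint, so one must use global consistency of the colouring — for instance, that a near‑rainbow colouring forces, for most vertices $v$ and colours $i$, the bipartite pattern between the colour‑$i$ out‑neighbourhood and the colour‑$i$ in‑neighbourhood of $v$ to be ``backward‑complete'', and that aggregating such constraints over enough vertices contradicts the pseudorandomness of $T$; turning this impossibility into the explicit bound $\sum_v|Q(v)|\ge c_k n^2$ is where the real work lies, while the pseudorandom inputs (near‑regularity, concentration of triangle‑midpoint counts) are routine. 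An alternative, possibly cleaner, route starts from the Sands–Sauer–Woodrow theorem applied to the $2$‑colouring merging $k-1$ colours: it returns either a colour‑$i$ king (already $n$ vertices) or a rich two‑step structure to recolour.
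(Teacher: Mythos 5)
Your part (b) is correct and is essentially the paper's own argument (Lemma \ref{l:func}): the same recursive block colouring in which cross-edges leaving $V_a$ get a dedicated colour $c_a$ and the remaining $k-t$ colours are reused inside the blocks, yielding the recursion $g(k)\le \frac{t-1}{2t}+\frac{1}{t}g(k-t)$ with $g(1)=1$; your observation that both updates are affine contractions with fixed point $\frac12$ and factors $\frac12,\frac13$, so only the multiset of steps matters and the optimal schedule uses $t=3$ steps plus one or two $t=2$ steps according to $k\bmod 3$, reproduces exactly the paper's inductive computation of the three cases of \eqref{e:t2}. The probabilistic details (near-regularity, Chernoff plus a union bound over the constantly many recursion levels) are as in the paper.

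Part (a), however, has a genuine gap, which you yourself flag as ``the crux'' and leave unresolved. Two remarks. First, your target $\sum_v|Q(v)|\ge c_k n^2$ uniformly over all colourings is stronger than what is needed and is not what the paper proves; the paper only produces a \emph{single} good root. Second, and more importantly, the missing idea is precisely how to rule out colourings in which, at every vertex, the colours heavily used on in-edges are disjoint from those heavily used on out-edges. The paper resolves this with a short dichotomy: fix $\alpha=\alpha(k)>0$ small. Either (Case 1) some vertex $v$ and colour $c$ have at least $\alpha n$ in-edges and at least $\alpha n$ out-edges of colour $c$; then, using the whp property that between any two disjoint $\alpha n$-sets there are at least $\alpha^2n^2/2-o(n^2)$ edges, one finds by pigeonhole a single in-neighbour set vertex $w$ with $\Omega(\alpha n)$ monochromatic $2$-paths $w\to v\to u$ into its in-neighbourhood, so $w$ dominates $n/2+\alpha n/2-o(n)$ vertices at depth $2$. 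Or (Case 2) no such $(v,c)$ exists; then define the \emph{signature} of $v$ as the set of colours appearing on at least $\alpha n$ in-edges of $v$ (nonempty once $\alpha<1/(2k)$), take by pigeonhole a set $U$ of $n/2^k$ vertices with a common signature $S$, and note that the majority colour $c$ inside $T[U]$ forces some $u\in U$ to have at least $\alpha n$ out-edges of colour $c$ (for $\alpha<1/(k2^{k+1})$), while $c\in S$ forces $\alpha n$ in-edges of colour $c$ at $u$ --- contradicting the Case 2 hypothesis. Your sketch gestures at ``aggregating global constraints against pseudorandomness'' but supplies no such mechanism; without this signature/pigeonhole step (or an equivalent), the claimed bound $f^*(k,2,n)\ge(c_k+\frac12)n$ is not established. (Your alternative suggestion via Sands--Sauer--Woodrow with merged colours is also only a pointer, not an argument: the SSW root may dominate only via long paths, so it does not obviously give a depth-$2$ gain over $n/2$.)
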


Turning to the two-color case, recall first that $f(2,2,n)$ is at most $\frac{2}{3}n+o(1)$ by Theorem \ref{t:1} item (i). This, however, is false for $f^*(2,2,n)$ which is significantly larger whp.
\begin{theorem}\label{t:3}
	(a) Whp, $f^*(2,2,n) +o(n) \ge \frac{49}{72}n$.
	On the other hand, (b) whp $f^*(2,2,n) - o(n) \le \frac{3}{4}n$.
\end{theorem}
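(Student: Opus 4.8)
Both parts rest on a simple description of maximum depth‑$2$ path‑monochromatic arborescences with a prescribed root. In an edge‑colored tournament $(T,\chi)$, such an arborescence rooted at $r$ may take all of $N^+(r)$ as its first level, each edge $rv$ keeping its color $\chi(rv)$, and then all vertices $w$ with $w\to r$ for which some $v\in N^+(r)$ satisfies $v\to w$ and $\chi(vw)=\chi(rv)$ as its second level; call such a vertex $w$ \emph{recoverable from $r$}. Writing $\rho_\chi(r)$ for the number of vertices recoverable from $r$ we obtain
\[
f_T(2,2)\;=\;1+\min_{\chi}\ \max_{r\in V(T)}\bigl(d^+_T(r)+\rho_\chi(r)\bigr),
\]
and the two parts of the theorem amount to an upper and a lower estimate of the right‑hand side for random $T$.

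For part (b) the plan is to exhibit one coloring that is good for almost every tournament. Fix a balanced bipartition $V=A\cup B$ and color an edge $1$ if its two ends lie on the same side and $2$ otherwise. Then a length‑two path $r\to v\to w$ has $\chi(rv)=\chi(vw)$ if and only if $r$ and $w$ lie on the same side, \emph{independently of $v$}; hence $w$ is recoverable from $r$ precisely when $w\to r$, $w$ lies on $r$'s side, and $N^+(r)\cap N^-(w)\neq\emptyset$. I would then use Chernoff's inequality together with a union bound to show that whp every vertex $r$ has $d^+_T(r)\le\tfrac n2+o(n)$ and at most $\tfrac n4+o(n)$ in‑neighbours on its own side, and that whp every pair $(r,w)$ with $w\to r$ has a vertex $v$ with $r\to v\to w$. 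On the intersection of these events $d^+_T(r)+\rho_\chi(r)\le\tfrac34 n+o(n)$ for every $r$, so by the displayed identity $f_T(2,2)\le\tfrac34 n+o(n)$, which is part (b).

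For part (a) the reduction is instead an averaging over the root. Summing the inner quantity of the displayed identity over all $r$,
\[
\sum_{r}\bigl(d^+_T(r)+\rho_\chi(r)\bigr)=\binom n2+X_\chi,\qquad
X_\chi:=\#\bigl\{(r,w):\ w\to r,\ \exists\, v\ \text{with}\ r\to v\to w\ \text{and}\ \chi(rv)=\chi(vw)\bigr\},
\]
so $\max_r\bigl(d^+_T(r)+\rho_\chi(r)\bigr)\ge\tfrac{n-1}{2}+X_\chi/n$. Since $\tfrac12+\tfrac{13}{72}=\tfrac{49}{72}$, it suffices to show that whp $T$ has the property that \emph{every} $2$‑coloring $\chi$ satisfies $X_\chi\ge(\tfrac{13}{72}-o(1))n^2$. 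I would first record that whp $T$ is quasirandom in the usual senses (all in‑ and out‑degrees $\tfrac n2\pm o(n)$, all codegrees $\tfrac n4\pm o(n)$, the number of directed $3$‑cycles $(\tfrac1{24}\pm o(1))n^3$, and edge counts within or between linear‑sized vertex sets as expected up to $o(n^2)$), which reduces the problem to the deterministic statement: in any $2$‑coloring of a sufficiently quasirandom tournament, the number $X_\chi$ of recoverable ``back‑pairs'' $(r,w)$ is at least $(\tfrac{13}{72}-o(1))n^2$.

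For this last step I would count monochromatic cherries inside directed $3$‑cycles. For an edge $e=(w\to r)$ put $\mathrm{cyc}(e)=|N^+(r)\cap N^-(w)|$ (which is $\tfrac n4\pm o(n)$ by quasirandomness) and let $g(e)$ be the number of $v\in N^+(r)\cap N^-(w)$ with $\chi(rv)=\chi(vw)$; then $e$ contributes to $X_\chi$ exactly when $g(e)\ge1$, while $\sum_e g(e)=T_3+2M$ where $T_3$ is the number of directed $3$‑cycles and $M$ the number of monochromatic ones (every directed $3$‑cycle has at least one monochromatic cherry, and a monochromatic one has three). A crude estimate gives only $X_\chi\ge(T_3+2M)/\max_e\mathrm{cyc}(e)\ge(\tfrac16-o(1))n^2$, i.e.\ only $f_T(2,2)\ge(\tfrac23-o(1))n$, so the gain to $\tfrac{49}{72}$ must genuinely use quasirandomness to rule out two extremal mechanisms at once. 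If the recoverable edges are few but each is ``fully'' recoverable ($g(e)=\mathrm{cyc}(e)$), which is the behaviour of the bipartition coloring, one shows that in a quasirandom tournament this forces a positive density of monochromatic $3$‑cycles, raising $M$ and hence $X_\chi$. If instead $M$ is negligible (for instance when the edges are split by a linear order of $V$, so both color classes are acyclic) then quasirandomness spreads the ``good'' intermediate vertices over a positive density of back‑pairs, so $X_\chi$ is already far larger than $\tfrac16 n^2$. Making this dichotomy quantitative---showing that each unrecoverable back‑pair rigidly pins down a monochromatic part of the coloring, bounding the aggregate of all such constraints by quasirandomness, and optimizing the resulting inequalities---is the heart of the argument and the source of the constant $\tfrac{13}{72}$; this is the step I expect to be the main obstacle, the reductions above being routine.
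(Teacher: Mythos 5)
Your part (b) is correct and is essentially the paper's own proof: the same balanced bipartition coloring (cross edges one color, internal edges the other), together with the observation that a length-$2$ path from $r$ to an in-neighbour on the opposite side can never be monochromatic, gives $f_T(2,2)\le \frac34 n+o(n)$ whp. (Your extra claim that every back-pair is recoverable under this coloring is not needed for the upper bound.)

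Part (a), however, has a genuine gap, and you flag it yourself. Your reduction is sound: the identity $f_T(2,2)=1+\min_\chi\max_r\bigl(d^+_T(r)+\rho_\chi(r)\bigr)$, the averaging $\max_r\bigl(d^+_T(r)+\rho_\chi(r)\bigr)\ge \frac{n-1}{2}+X_\chi/n$, and the cherry-in-$3$-cycle count $\sum_e g(e)=T_3+2M$ are all correct, but as you note they only yield $X_\chi\ge(\frac16-o(1))n^2$, i.e.\ the bound $\frac23 n$, which does not even separate $f^*(2,2,n)$ from the extremal value $f(2,2)\le\frac23$. Everything beyond that (the ``dichotomy'', the claim that each unrecoverable back-pair rigidly constrains the coloring) is a heuristic with no argument attached, and it is exactly where the constant $\frac{13}{72}$ would have to come from; nothing in your outline indicates it would produce that specific number. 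The paper gets $\frac{49}{72}$ by a different mechanism: it defines, for a fixed $q$-vertex tournament $Q$, the quantity $s(Q)$ as the minimum over $2$-colorings of the total number of vertices monochromatically dominated within $Q$ by paths of length at most $2$, finds by computer search a $9$-vertex tournament with $s(Q)=49$ (no tournament on at most $8$ vertices beats the ratio $\frac23$), proves via the Frankl--R\"odl near-perfect matching theorem (Lemma \ref{l:fr}, applied after the concentration and cleaning argument of Lemma \ref{l:hyp-properties}) that whp a random tournament contains $(1-o(1))\binom{n}{2}/\binom{q}{2}$ pairwise edge-disjoint copies of $Q$ (Corollary \ref{coro:matching}), and then averages the local contribution $s(Q)$ over this packing (Lemma \ref{l:sq}). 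In other words, the quantitative step missing from your plan is supplied in the paper by a local extremal input (the concrete $Q$ on $9$ vertices with $s(Q)=49$) plus a packing theorem; without something playing that role, your outline does not prove the $\frac{49}{72}$ bound.
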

Notice that Theorem \ref{t:3} (a) states that we can take $c_2 = 13/72$ in the statement of  Theorem \ref{t:2} (a), when applied to $k=2$. However, the proof of Theorem \ref{t:3} (a) is significantly different from the proof of Theorem \ref{t:2} (a)
(using the method of Theorem \ref{t:2} in the case $k=2$ will give a very small $c_2$ which will not be enough to show
the clear distinction from $f(2,2,n))$. We can summarize parts (a) of Theorems \ref{t:2} and \ref{t:3} as saying that for any $k \ge 2$, $f^*(k,2,n)$ is whp significantly larger than $f(k,2,n)$.

We end the introduction with a conjecture regarding two-edge colorings of random tournaments and depth larger than $2$. Motivated in part by the fact that random tournaments have $\Theta(n^2)$ directed paths of length at most $3$ connecting any two vertices, we conjecture that whp,
a random tournament has the property that in any two-edge coloring, there is a vertex
that monochromatically dominates all other vertices using paths of depth at most $3$.
\begin{conjecture}
	$f^*(2,3,n)=n$ whp.
\end{conjecture}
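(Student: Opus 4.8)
The plan is to build on the Sands--Sauer--Woodrow theorem ($f_T(2)=n$) and then use expansion properties of a random tournament to \emph{compress} the depth of a monochromatic dominating arborescence from unbounded down to $3$. Fix a red/blue colouring of $T$. By $f_T(2)=n$ there is a vertex $v$ and a colour, say red, such that $v$ red-dominates every other vertex; write $R_i$ for the set of vertices at red-distance exactly $i$ from $v$, so $R_0=\{v\}$, $R_1=N^+_{\mathrm{red}}(v)$, and $\bigcup_{i\ge 0}R_i=V$. If $R_{\ge 4}:=\bigcup_{i\ge 4}R_i=\varnothing$ then $v$ roots a path-monochromatic spanning arborescence of depth $\le 3$ and we are done; so assume the colouring is \emph{bad}, meaning every vertex has both red- and blue-eccentricity at least $4$ (``eccentricity $\infty$'' allowed), and aim for a contradiction from properties holding whp for random $T$.

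\textbf{Random preliminaries.} Record the standard whp facts: every in/out-degree is $\tfrac n2+O(\sqrt{n\log n})$; between disjoint $A,B$ with $|A|,|B|=\Omega(n)$ there are $\Theta(|A|\,|B|)$ edges in each direction; and for every vertex $u$ and every $A$ with $|A|\ge n/5$ there are $\Omega(n)$ vertices $x\in N^-(u)$ each receiving $\Omega(n)$ edges from $A$, so that there are $\Theta(n^2)$ paths of lengths $2$ and $3$ from $A$ to $u$, spread over $\Omega(n)$ penultimate vertices.

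\textbf{Squeezing the layers.} Replacing $v$ by a vertex of near-maximum out-degree and taking its majority out-colour, arrange $|R_1|\ge \tfrac n4-O(\sqrt{n\log n})$. A vertex $u\in R_{\ge 3}$ receives no red edge from $R_1$, so by the preliminaries all $\Omega(n)$ edges of $T$ from $R_1$ into $u$ are blue; likewise each $u\in R_{\ge 4}$ receives no red edge from $R_1\cup R_2$. Hence a non-empty $R_{\ge 4}$ forces an almost-complete \emph{blue} bipartite pattern from the linear-sized set $R_1$ onto $R_{\ge 3}$. Now invoke that the colouring is bad for \emph{every} vertex, not just $v$: apply the same analysis to a vertex $w$ of large blue-out-degree --- e.g.\ a vertex of $R_1$ sending blue edges to much of $R_{\ge 3}$ --- whose blue-failure then forces an almost-complete \emph{red} bipartite pattern that, by the expansion of $T$ between linear-sized sets, must overlap the blue pattern already pinned down; an edge forced to be both colours is the desired contradiction.

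\textbf{The main obstacle.} The real difficulty is that $v$, $w$ and all failure-witnesses depend on the colouring, which the adversary may choose knowing $T$; so the chain of implications above must hold for \emph{all} colourings simultaneously, and a union bound over the $2^{\binom n2}$ colourings is far too weak. The plausible routes are: (i) a structural dichotomy showing any bad colouring is close to one of sub-exponentially many ``partition-type'' colourings, which are then refuted directly; (ii) an edge-exposure martingale in which each revealed edge kills a constant fraction of the surviving bad colourings; or (iii) an entropy-compression argument, whereby a bad colouring would encode $T$ in $\binom n2-\Omega(n^2)$ bits. Pushing one of these through --- in particular controlling the $\Theta(n)$ vertices whose out-degrees agree up to $O(\sqrt{n\log n})$, any of which could be the root --- is where essentially all the work lies, which is presumably why the statement is only conjectured.
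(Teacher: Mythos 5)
The statement you are asked about is stated in the paper only as a conjecture; the paper gives no proof of it, and your text does not supply one either. You say so yourself in the final paragraph: the step of making the argument work for \emph{all} colourings simultaneously (the colouring being chosen adversarially with full knowledge of $T$, so that a union bound over the $2^{\binom{n}{2}}$ colourings is useless) is exactly the missing ingredient, and you only list three candidate strategies without carrying any of them out. So what you have is a programme, not a proof, and the acknowledged gap is the whole difficulty.

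Beyond that, the sketch itself has concrete errors even for a single fixed colouring. First, you misquote the Sands--Sauer--Woodrow theorem: $f_T(2)=n$ gives a vertex $v$ and a spanning \emph{path-monochromatic} arborescence, meaning each root-to-leaf path is monochromatic, but different branches may use different colours; it does \emph{not} give a single colour, ``say red'', in which $v$ dominates every other vertex. Hence the red distance layers $R_0,R_1,\dots$ with $\bigcup_i R_i=V$ need not exist, and the whole ``squeezing the layers'' analysis starts from a false premise. Second, your definition of a bad colouring (every vertex has both red- and blue-eccentricity at least $4$) is not the negation of $f_T(2,3)=n$; the correct negation is that for every $v$ there is a \emph{single} vertex $u$ that $v$ cannot reach by a path of length at most $3$ in \emph{either} colour, whereas your condition allows the red-far and blue-far witnesses to be different vertices and so neither implies nor is implied by failure of depth-$3$ domination in the way your argument needs. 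Third, the claimed contradiction (an edge forced to be both red and blue) is never actually derived: the auxiliary vertex $w$ is not known to blue-dominate all of $V$, so ``the same analysis'' cannot be applied to it, and the asserted overlap of the red and blue bipartite patterns is not pinned down to a common ordered pair of vertices. In short, the approach as written would fail even before reaching the adversarial-colouring obstacle that you correctly identify as the main one.
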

The rest of this paper consists of two sections. In the next section we prove Theorem \ref{t:1}
and in the last section we consider random tournaments and prove Theorems \ref{t:2} and \ref{t:3}.

\section{Depth $\ell$ arborescences in $k$-edge colored tournaments}\label{sec:f2l}

In this section we prove Theorem \ref{t:1}. We break the proof into several lemmas, for some of which we need the following definition.
For two tournaments $T_1$ and $T_2$, the {\em lexicographic product} $T_1 \circ T_2$ is the tournament on vertex set $V(T_1) \times V(T_2)$ where $((u_1,v_1),(u_2,v_2))$
is an edge if $v_1 \neq v_2$ and $(v_1,v_2) \in E(T_2)$  or if $v_1=v_2$ and $(u_1,u_2) \in E(T_1)$.
Edges of the former case are called {\em outer} and edges of the latter case are called {\em inner}.
Notice that the directed graph consisting of the outer edges is the $|V(T_1)|$-blowup of $T_2$.
For a tournament $T$ we denote the $r$'th power of $T$ by $T^r=T^{r-1} \circ T$.

We may extend the lexicographic product $T_1 \circ T_2$ to edge-colored tournaments
by assigning an outer edge $((u_1,v_1),(u_2,v_2))$ the color of $(v_1,v_2)$ and an inner edge
$((u_1,v),(u_2,v))$ the color of $(u_1,u_2)$. We call this coloring of $T_1 \circ T_2$ the {\em product coloring}. Given an edge colored tournament $T$, and applying the product coloring repeatedly, we obtain
the {\em power coloring} of $T^r$ with respect to the original coloring of $T$.
Observe that if $T$ has no monochromatic directed cycle, neither does $T^r$.

Let $R_q$ be the following red-blue edge colored tournament on vertex set $[q]$,
defined in \cite{SSW-1982}.
(note: we use $R_q$ to denote both the tournament and its particular coloring now stated).
It consists of a Hamiltonian directed red path $1,2,\ldots,q$ while all other edges are blue and oriented from a higher indexed vertex to a lower indexed vertex. Observe that vertex $1$ can reach vertex $q$ only by
the monochromatic Hamiltonian red path. For all $i > 1$, there is no monochromatic path (of any length) from
$i$ to $i-1$. Hence, $f_{R_q}(2,q-2) = q-1$. Also observe that $R_q$ has no monochromatic directed cycle.
\begin{lemma}\label{l:lex}
	If $n$ is a positive integer power of $\ell+2$, then $f(2,\ell,n) \le \frac{1+\ell n}{\ell+1}$.
\end{lemma}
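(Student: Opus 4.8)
The plan is to exhibit, for every $r\ge 1$, an $n$-vertex tournament with $n=(\ell+2)^r$ and a red--blue coloring in which no path-monochromatic arborescence of depth at most $\ell$ is large. The natural candidate is $R_{\ell+2}$ together with its lexicographic powers: set $q=\ell+2$, let $T=R_q$ with the coloring defined above, and for $r\ge 1$ equip $T^r$ with the power coloring. Since $T^r$ is a tournament on $n=(\ell+2)^r$ vertices and $f(2,\ell,n)=\min_{T'} f_{T'}(2,\ell)$, it suffices to prove the bound $f_{T^r}(2,\ell)\le \frac{1+\ell(\ell+2)^r}{\ell+1}$, and I will do this by induction on $r$. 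The base case $r=1$ is precisely the observation recorded just before the lemma: $f_{R_{\ell+2}}(2,\ell)=\ell+1=\frac{1+\ell(\ell+2)}{\ell+1}$.

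For the inductive step I would view $T^r=T^{r-1}\circ T$, so that the vertex set splits into blocks $B_1,\dots,B_{\ell+2}$ indexed by $V(T)=[\ell+2]$, each $B_i$ being a copy of $T^{r-1}$ with the power coloring, while for $i\ne j$ every edge between $B_i$ and $B_j$ is an outer edge colored and oriented according to the edge $(i,j)$ of $R_{\ell+2}$; concretely all edges from $B_i$ to $B_{i+1}$ are red, and all edges from $B_j$ to $B_i$ with $j\ge i+2$ are blue. Let $A$ be any path-monochromatic arborescence of depth at most $\ell$ in $T^r$, with root $\rho\in B_{i_0}$. The proof rests on two claims. \emph{Claim A:} $|A\cap B_{i_0}|\le f_{T^{r-1}}(2,\ell)$. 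This holds because along any monochromatic directed path starting in $B_{i_0}$ the block index is monotone --- non-decreasing on red paths (a red step either stays in a block or moves $B_i\to B_{i+1}$) and non-increasing on blue paths --- so a monochromatic path from $\rho$ to a vertex of $B_{i_0}$ stays inside $B_{i_0}$; hence every $A$-ancestor of a vertex of $B_{i_0}$ lies in $B_{i_0}$, so $A\cap B_{i_0}$ is a path-monochromatic arborescence of depth at most $\ell$ inside $B_{i_0}\cong T^{r-1}$. \emph{Claim B:} at least one block is disjoint from $A$. To see this I would determine which blocks $\rho$ can reach by monochromatic paths of length at most $\ell$: red paths reach only $B_{i_0},B_{i_0+1},\dots,B_{\min(i_0+\ell,\ell+2)}$ (reaching $B_{i_0+s}$ costs at least $s$ edges), while blue paths reach only $B_{i_0}$ together with $B_1,\dots,B_{i_0-2}$ (the first blue step out of $B_{i_0}$ drops the index by at least $2$). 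If $i_0\ge 2$ this misses $B_{i_0-1}$; if $i_0=1$ it misses $B_{\ell+2}$, since the only way into $B_{\ell+2}$ is the red path $B_1\to B_2\to\cdots\to B_{\ell+2}$ of length $\ell+1>\ell$. In either case some block $B_{j_0}$ is disjoint from $A$, and $j_0\ne i_0$ since $\rho\in B_{i_0}$.

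Combining the two claims, bounding the remaining $\ell$ blocks trivially by $|B_j|=(\ell+2)^{r-1}$, and using that the blocks partition $V(T^r)$,
\[
|A| \;=\; \sum_{i=1}^{\ell+2}|A\cap B_i| \;\le\; f_{T^{r-1}}(2,\ell) \;+\; 0 \;+\; \ell\,(\ell+2)^{r-1}.
\]
Plugging in the inductive hypothesis $f_{T^{r-1}}(2,\ell)\le\frac{1+\ell(\ell+2)^{r-1}}{\ell+1}$ and simplifying gives $\frac{1+\ell(\ell+2)^{r-1}}{\ell+1}+\ell(\ell+2)^{r-1}=\frac{1+\ell(\ell+2)^{r}}{\ell+1}$, so $f_{T^r}(2,\ell)\le\frac{1+\ell(\ell+2)^r}{\ell+1}$, which completes the induction and hence the lemma.

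The only genuinely delicate point is Claim B: one must argue reachability of blocks carefully and, in particular, treat the boundary case $i_0=1$ separately, where the missed block is the top block $B_{\ell+2}$ rather than $B_{i_0-1}$. Everything else --- the monotonicity observation behind Claim A, the trivial per-block size bounds, and the closed-form algebra --- is routine bookkeeping.
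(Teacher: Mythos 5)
Your proof is correct and is essentially the paper's own argument: the same construction $R_{\ell+2}^r$ with the power coloring, the same induction on $r$, decomposing the count into the root's block (inductive bound), one block unreachable by monochromatic paths of length at most $\ell$ (the block below the root's, or the top block when the root lies in $B_1$), and the remaining $\ell$ blocks bounded trivially by $(\ell+2)^{r-1}$. The only cosmetic caveat --- writing $f_{T^{r-1}}(2,\ell)$ where what the induction really tracks is the maximum arborescence size under the specific power coloring of $T^{r-1}$ --- is a looseness the paper itself shares, and it is harmless because that stronger statement is exactly what your inductive step establishes.
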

\begin{proof}
	Let $n=(\ell+2)^r$ and consider $R_{\ell+2}^r$ together with its power coloring.
	We prove by induction on $r$ that $f_{R_{\ell+2}^r}(2,\ell)=\frac{1+\ell n}{\ell+1}$.
	For $r=1$ we use that $f_{R_{\ell+2}}(2,\ell) = \ell+1$, so the claim holds. Assume now that $r > 1$ and that we have already proved that
	\begin{equation}\label{e:1}
	f_{R_{\ell+2}^{r-1}}(2,\ell) = \frac{1+\ell(\ell+2)^{r-1}}{\ell+1}\;.
	\end{equation}
	Consider some vertex $(u,v)$ of $R_{\ell+2}^r$. Then $v \in V(R_{\ell+2}) = [\ell+2]$.
	If $v=1$ then $(u,1)$ cannot reach monochromatically with a path of length $\ell$ any vertex of the form $(x,\ell+2)$ because of the coloring of $R_{\ell+2}$ (to reach it, we must pass $\ell+1$ outer edges,
	all of which are red). If $v > 1$, then there is no monochromatic path (of any length)
	from $(u,v)$ to any vertex of the form $(x,v-1)$ because of the coloring of $R_{\ell+2}$.
	So, we see that in any case $(u,v)$ fails to reach at least $n/(\ell+2)=(\ell+2)^{r-1}$ vertices
	of the form $(u',v')$ for $v' \neq v$ via a monochromatic path of length at most $\ell$.
	Now consider monochromatic paths from $(u,v)$ to vertices of the form $(u',v)$. There are two options for such paths. Either each vertex on the path is also of the form $(u'',v)$, in which case such a path corresponds to a monochromatic path entirely within $R_{\ell+2}^{r-1}$. But then, by the induction hypothesis,
	such paths whose lengths are at most $\ell$ can reach at most the amount of vertices equal to the right hand side of \eqref{e:1}. Otherwise, as $R_q$ has no monochromatic directed cycles, the path uses at least two outer edges, at least one of which is red, and at least one of which is blue because of the coloring of $R_{\ell+2}$, but then the path is not monochromatic.
	Summarizing, we see that $(u,v)$ can reach, using monochromatic paths of length at most $\ell$,
	precisely
	$$
	\frac{1+\ell(\ell+2)^{r-1}}{\ell+1} + (\ell+2)^{r}-2(\ell+2)^{r-1} = \frac{1+\ell(\ell+2)^r}{\ell+1}=
	\frac{1+\ell n}{\ell+1}
	$$
	vertices. The lemma now follows since, by its definition, $f(2,\ell,n) \le 	f_{R_{\ell+2}^r}(2,\ell)$.
\end{proof}

\begin{lemma}\label{l:ch}
	Assuming CH, it holds that $f(2,\ell,n) \ge 1+\lfloor \frac{\ell}{\ell+1}n \rfloor$ 
\end{lemma}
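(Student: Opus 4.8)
\section*{Proof proposal for Lemma \ref{l:ch}}

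The plan is to argue by contradiction, using CH to manufacture a short ``certificate cycle'' and then invoking the Sands--Sauer--Woodrow theorem (the fact $s(2)=1$, equivalently $f_T(2)=n$) on a bounded-size induced sub-tournament. Fix a $2$-edge-colored tournament $T$ on $n$ vertices and set $r=\lceil n/(\ell+1)\rceil$, so that $1+\lfloor\frac{\ell}{\ell+1}n\rfloor=n-r+1$. Say that $u$ \emph{$\ell$-dominates} $v$ if there is a monochromatic directed path of length at most $\ell$ from $u$ to $v$. Assume toward a contradiction that no depth-$\le\ell$ path-monochromatic arborescence of $T$ has $n-r+1$ vertices. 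The first step is to deduce that every vertex of $T$ fails to $\ell$-dominate at least $r$ other vertices; granting this, I would form the auxiliary digraph $D$ on $V(T)$ with $u\to_D v$ iff $u$ does not $\ell$-dominate $v$. Since an edge $u\to_T v$ would itself be a monochromatic path of length $1$, every arc of $D$ is reversed in $T$, so $D$ is an oriented graph with $\delta^+(D)\ge r\ge n/(\ell+1)$. Applying CH to $D$ yields a directed cycle $C$ of length $m$ with $m\le\lceil n/r\rceil\le\ell+1$ (the last inequality because $n/r\le\ell+1$).

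Next I would exploit this cycle. Let $W=V(C)$, so $3\le|W|=m\le\ell+1$, and every vertex of $W$ fails to $\ell$-dominate its out-neighbour along $C$, which also lies in $W$. On the other hand, applying the Sands--Sauer--Woodrow theorem to the induced $2$-colored sub-tournament $T[W]$ produces a \emph{spanning} path-monochromatic arborescence of $T[W]$, rooted at some $a\in W$, whose depth is at most $|W|-1=m-1\le\ell$. In particular $a$ $\ell$-dominates every other vertex of $W$, directly contradicting the conclusion of the previous paragraph applied to $a$. This contradiction would complete the argument, and—combined with Lemma \ref{l:lex} along $n=(\ell+2)^r$ and a short monotonicity argument to pass to general $n$—would give the matching lower half of Theorem \ref{t:1}(ii).

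I expect the genuine obstacle to be the very first step: passing from ``$T$ has no large depth-$\le\ell$ path-monochromatic arborescence'' to ``every vertex fails to $\ell$-dominate at least $r$ others,'' i.e.\ to $\delta^+(D)\ge r$. The set of vertices a fixed $u$ $\ell$-dominates is exactly the union $B_R(u,\ell)\cup B_B(u,\ell)$ of the radius-$\ell$ out-balls of $u$ in the red and blue subdigraphs; each ball on its own is the vertex set of a path-monochromatic arborescence (its BFS tree of depth $\le\ell$), but a red-reachable vertex and a blue-reachable vertex cannot in general be assembled into a single path-monochromatic arborescence, so a priori the largest arborescence rooted at $u$ can be strictly smaller than $|B_R(u,\ell)\cup B_B(u,\ell)|$. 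The crux is therefore to show this loss is harmless: it suffices to control it as $n\to\infty$ (which is all that is needed, since $f(2,\ell)$ is defined as a limit), for instance by showing the largest arborescence at $u$ differs from $|B_R(u,\ell)\cup B_B(u,\ell)|$ by $o(n)$, or by running the CH argument on a suitably refined reachability digraph. The remaining ingredients—verifying $D$ is an oriented graph, the arithmetic $\lceil n/r\rceil\le\ell+1$, and that the Sands--Sauer--Woodrow arborescence on $T[W]$ automatically has depth $\le\ell$ because $|W|\le\ell+1$—are routine, and the CH-plus-SSW mechanism is otherwise robust.
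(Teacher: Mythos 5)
Your argument is, step for step, the paper's own proof of Lemma \ref{l:ch}: the same auxiliary digraph (the paper calls it $F$) recording non-domination, the same dichotomy that either some vertex $\ell$-dominates at least $n-\lceil n/(\ell+1)\rceil$ others or $F$ has minimum out-degree at least $\lceil n/(\ell+1)\rceil$, the same application of CH to produce a cycle of length at most $\ell+1$, and the same use of Sands--Sauer--Woodrow on the cycle's vertex set to contradict non-domination along the cycle. (For that last step you only need the dominating vertex that SSW literally provides, not a spanning arborescence of $T[W]$, and a dominating vertex is indeed all you use, since monochromatic paths inside $T[W]$ have length at most $|W|-1\le\ell$.) Everything you actually carry out is correct and coincides with the paper.

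Concerning the step you single out as the ``genuine obstacle'': the paper does not close it either. Its proof simply says that the low-out-degree vertex ``monochromatically dominates at least $1+\lfloor\frac{\ell}{\ell+1}n\rfloor$ vertices \dots and we are done'', i.e.\ it identifies the number of vertices reachable from $v$ by monochromatic paths of length at most $\ell$ with the size of a path-monochromatic arborescence of depth at most $\ell$. For $\ell=2$ you could have discharged this yourself: the colour of a monochromatic path of length $2$ from $v$ is forced by its first edge, so attaching every out-neighbour of $v$ directly and hanging each vertex at monochromatic distance exactly $2$ below a witnessing out-neighbour gives a depth-$2$ path-monochromatic arborescence spanning $\{v\}\cup B_R(v,2)\cup B_B(v,2)$; this is the case that feeds the unconditional bound $f(2,2,n)\ge(\sqrt 7-2)n$ via Shen's theorem. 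For $\ell\ge 3$ your caution is substantively justified: at a fixed root the union of the two balls need not be spannable by a single arborescence rooted there (one can build small tournaments in which two vertices are each reachable within distance $\ell$ only through a common intermediate vertex, one by a red path and one by a blue path, so that vertex would need two parents), so converting domination into an arborescence of the same size requires either the domination reading of $f_T(2,\ell)$ that the paper's proofs implicitly adopt or an additional argument the paper does not supply. In short, relative to the paper's own proof you have reproduced it exactly, and the step you could not justify is precisely the one the paper asserts without proof.
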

\begin{proof}
	Suppose $T$ is a $2$-edge colored $n$-vertex tournament.
	Construct an oriented graph $F$ on $V(T)$ as follows. We have $(u,v) \in E(F)$ if and only if $u$ cannot reach $v$ with a monochromatic path of length at most $\ell$.
	If $F$ has a vertex $v$ with out-degree at most $\lceil n/(\ell+1)\rceil-1$ then $v$ monochromatically dominates (in $T$) at least  $1+\lfloor \frac{\ell}{\ell+1}n \rfloor$ vertices (including itself) via monochromatic paths of length at most $\ell$ and we are done.
	Otherwise, the minimum out-degree of $F$ is at least $\lceil n/(\ell+1) \rceil$.
	Then, assuming CH, $F$ has a directed cycle $C$ of length at most
	$\lceil n/\lceil n/(\ell+1) \rceil \rceil = \ell+1$.
	Let $X$ be the set of vertices of $C$. Then $T[X]$ is a $2$-edge colored sub-tournament of $T$ on at most $\ell+1$ vertices, and by the result of Sands et al. \cite{SSW-1982},
	$T[X]$ has a vertex $v$ that monochromatically dominates all other vertices of $T[X]$. In particular, there is a monochromatic path of length at most $\ell$ from $v$ to all other vertices of $T[X]$, contradicting the fact that $v$ cannot reach the vertex following it in $C$ with a path of length at most $\ell$.
\end{proof}
	
\begin{lemma}\label{l:3}
	If $n$ is a positive integer power of $3$, then $f(k,\ell,n)=f(3,\ell,n)=(n+1)/2$ for all $\ell \ge 1$.
\end{lemma}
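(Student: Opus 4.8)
The plan is to prove the two matching bounds separately, the lower bound being trivial and uniform in $k$ and $\ell$, and the upper bound coming from a single $3$-edge colored extremal tournament that works for every depth at once.

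For the lower bound, note that in any $n$-vertex tournament the out-degrees sum to $\binom n2$, so some vertex $v$ has out-degree at least $(n-1)/2$, which is an integer since $n=3^r$ is odd. Then $v$ together with its out-neighbors forms a path-monochromatic arborescence of depth $1\le\ell$ with at least $(n+1)/2$ vertices (every path has length $1$, hence is monochromatic). Thus $f_T(k,\ell)\ge(n+1)/2$ for every $T$, every $k$, and every $\ell\ge1$, so $f(k,\ell,n)\ge(n+1)/2$. It therefore suffices to exhibit, for every $r$, an $n$-vertex tournament ($n=3^r$) with a $3$-edge coloring under which no path-monochromatic arborescence---of any depth---has more than $(n+1)/2$ vertices: since such a coloring is in particular a $k$-edge coloring for every $k\ge3$, this gives $f(k,\ell,n)\le(n+1)/2$ for all $k\ge3$ and $\ell\ge1$, and combined with the lower bound yields $f(k,\ell,n)=f(3,\ell,n)=(n+1)/2$.

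For the construction I would take $\Delta$ to be the cyclic tournament on $[3]$ with edges $1\to2$, $2\to3$, $3\to1$, colored with three distinct colors, and consider $\Delta^r$ with its power coloring. The core of the argument is the claim, proved by induction on $r$, that in $\Delta^r$ every vertex monochromatically dominates exactly $(3^r+1)/2$ vertices (including itself); granting this, every path-monochromatic arborescence of $\Delta^r$ has at most $(n+1)/2$ vertices, since each of its vertices is reached from the root by a monochromatic directed path. The base case $r=1$ is immediate: in $\Delta$ a vertex reaches only itself and its unique out-neighbor, as the following edge of the triangle has a different color. For the inductive step, write $\Delta^r=\Delta^{r-1}\circ\Delta$, so vertices are pairs $(u,v)$ with $u\in V(\Delta^{r-1})$, $v\in[3]$; the inner edges inside a fixed layer $V(\Delta^{r-1})\times\{v\}$ form a copy of $\Delta^{r-1}$ with its power coloring, and all outer edges leaving layer $v$ carry the single color of the edge $v\to v+1$ of $\Delta$ (indices modulo $3$). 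The key observation is that a monochromatic path in $\Delta^r$ uses at most one outer edge: each color of $\Delta$ occurs on a unique edge of $\Delta$, so along a monochromatic path all outer edges project in the second coordinate to the same edge $p\to q$, and since inner edges never change the second coordinate this edge cannot be traversed twice. Consequently the set of vertices monochromatically dominated by $(u,v)$ is exactly $\bigl(R(u)\times\{v\}\bigr)\cup\bigl(V(\Delta^{r-1})\times\{v+1\}\bigr)$, where $R(u)$ is the set monochromatically dominated by $u$ in $\Delta^{r-1}$: inside layer $v$ one exactly reproduces $\Delta^{r-1}$ on inner edges; a single outer edge out of $(u,v)$ already reaches every vertex of layer $v+1$; and layer $v+2$ is unreachable since a monochromatic path uses at most one outer edge. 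As these two sets lie in distinct layers, the inductive hypothesis gives $\lvert R(u)\rvert+3^{r-1}=(3^{r-1}+1)/2+3^{r-1}=(3^r+1)/2$ dominated vertices, completing the induction.

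The main obstacle is the inductive step, i.e.\ identifying the monochromatically dominated set of a vertex of $\Delta^r$ exactly; the two points needing care there are that the power coloring restricted to the inner edges of one layer is precisely the power coloring of $\Delta^{r-1}$, and the ``at most one outer edge'' fact about monochromatic paths. Everything else---the lower bound and the passage from general $k\ge3$ to $k=3$---is immediate.
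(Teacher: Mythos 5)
Your proof is correct and follows essentially the same route as the paper: the lower bound via a vertex of out-degree at least $(n-1)/2$, and the upper bound via the $r$-th power of the rainbow-colored directed triangle with its power coloring, showing each vertex monochromatically dominates only itself and its out-neighbors. You merely spell out the induction (the ``at most one outer edge'' observation) that the paper leaves as a ``simple induction''.
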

\begin{proof}
	Let $C$ denote the directed triangle where each edge is colored with a distinct color. Now, let $n=3^r$ and consider $C^r$ together with its power coloring.
	Simple induction shows that $C^r$ is a regular tournament and that each vertex monochromatically dominates its out neighbors, and no other vertex. Hence, $f_{C^r}(3,\ell)=(n+1)/2$ for all $\ell \ge 1$.
	Finally, observe that we always have $(n+1)/2 \le f(k,\ell,n) \le f(3,\ell,n)$ since 
	every tournament has a vertex with out-degree at least $(n-1)/2$.
\end{proof}

\begin{lemma}\label{l:limit}
	For all $k \ge 3$ and all $\ell \ge 2$, $f(k,\ell)$ exists and equals $\frac{1}{2}$.
\end{lemma}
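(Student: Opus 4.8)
The plan is to pin the limit down from both sides. The lower bound is immediate and was already noted in the excerpt: every $n$-vertex tournament has a vertex of out-degree at least $(n-1)/2$, which is the root of a depth-$1$ (hence depth at most $\ell$) path-monochromatic arborescence on at least $(n+1)/2$ vertices, so $f(k,\ell,n)\ge (n+1)/2$ and therefore $\liminf_{n\to\infty} f(k,\ell,n)/n\ge \tfrac12$. It remains to construct, for \emph{every} large $n$, a $k$-edge colored $n$-vertex tournament in which every path-monochromatic arborescence of depth at most $\ell$ has at most $\tfrac n2 + o(n)$ vertices; since $k\ge 3$ we have three colors available, and the natural building block is the directed triangle $C$ with its three distinct edge colors, whose powers were analyzed in Lemma~\ref{l:3}.

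For a given $n$, let $m=3^r$ be the largest power of $3$ with $3^r\le \sqrt n$, so that $m=\Theta(\sqrt n)$ and $n/m=\Theta(\sqrt n)$. Recall from the proof of Lemma~\ref{l:3} that $C^r$ is a regular tournament with no monochromatic directed cycle in which each vertex monochromatically dominates exactly its $(m-1)/2$ out-neighbors, each via the direct edge. Form an \emph{unbalanced blow-up} $T^\ast$: replace vertex $i\in[m]$ by a set $V_i$ with $|V_i|=a_i$, where the $a_i\in\{\lfloor n/m\rfloor,\lceil n/m\rceil\}$ are chosen so that $\sum_i a_i=n$ (possible since $m\le n$); orient each $V_i$ as a transitive tournament with all its edges of one fixed color, and whenever $(i,j)$ is an edge of $C^r$ orient all edges from $V_i$ to $V_j$ and color them with the $C^r$-color of $(i,j)$. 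The key structural claim is that in $T^\ast$ a vertex $u\in V_i$ monochromatically dominates, by monochromatic directed paths of arbitrary length, exactly the vertices following $u$ inside $V_i$ together with every vertex of $V_j$ over all out-neighbors $j$ of $i$ in $C^r$. Each such target is reached in one step; conversely, a monochromatic directed walk in $T^\ast$ projects, after deleting its inner (stationary) steps, to a monochromatic directed walk in $C^r$, and since $C^r$ has no monochromatic directed cycle such a walk cannot return to its starting fiber, nor reach a fiber $V_j$ with $j$ neither $i$ nor an out-neighbor of $i$ (as $i$ does not monochromatically dominate such a $j$ in $C^r$). This is essentially the fiber argument already used in the proof of Lemma~\ref{l:lex}.

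Consequently every $u\in V_i$ is the root of a path-monochromatic arborescence of depth at most $\ell$ (in fact of depth $1$) on at most $1+(a_i-1)+\tfrac{m-1}{2}\lceil n/m\rceil = \tfrac n2 + O(m) + O(n/m)=\tfrac n2+O(\sqrt n)$ vertices, and under this particular coloring no larger one exists. Hence $f(k,\ell,n)\le f_{T^\ast}(k,\ell)\le \tfrac n2 + O(\sqrt n)$, so $\limsup_{n\to\infty} f(k,\ell,n)/n\le \tfrac12$, and together with the lower bound this shows that $f(k,\ell)=\lim_{n\to\infty} f(k,\ell,n)/n$ exists and equals $\tfrac12$. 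I expect the delicate points to be, first, the reachability characterization in $T^\ast$ — one must verify it for monochromatic paths of all lengths and check that the chosen inner color does not create extra monochromatic reach — although this is close to the $R_q$-power analysis of Lemma~\ref{l:lex}; and second, the choice $m=\Theta(\sqrt n)$: one needs $m\to\infty$ so that the blow-ups of any out-neighborhood sum to $(1+o(1))\tfrac{m-1}{2}\cdot\tfrac nm$, while simultaneously $m=o(n)$ so the $O(m)$ rounding error is negligible; choosing $m\le n$ as large as possible, the naive option, would only yield the useless bound $\tfrac n2+\Theta(n)$.
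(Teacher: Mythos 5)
Your proposal is correct, but your upper-bound construction differs from the paper's. The paper also builds on the $3$-colored triangle power $C^r$, but it handles the fact that $3^r$ need not equal $n$ probabilistically: it takes $r$ minimal with $3^r\ge n$ and passes to a \emph{random} $n$-vertex subtournament of $C^r$, using concentration of the hypergeometric distribution to conclude that with positive probability the maximum out-degree (hence the maximum monochromatically dominated set) is at most $(1+\varepsilon)n/2$; this reuses the same random-subtournament device that the paper also employs for Lemmas \ref{l:limit-2} and \ref{l:limit-3}, so no new structural analysis is needed beyond the property of $C^r$ established in Lemma \ref{l:3}. You instead interpolate deterministically, via an unbalanced blow-up of $C^r$ with $m=3^r=\Theta(\sqrt n)$ fibers, each fiber a transitive tournament colored in a single color. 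Your fiber/projection argument is sound: a monochromatic walk in $T^\ast$ projects to a monochromatic walk in $C^r$, the absence of monochromatic directed cycles in $C^r$ (noted in the paper for power colorings) forbids returning to the starting fiber, and Lemma \ref{l:3}'s characterization of monochromatic domination in $C^r$ confines the reachable fibers to out-neighbors of the starting fiber; the inner color creates no additional reach for exactly this reason. What each approach buys: yours is fully deterministic and gives an explicit bound $f(k,\ell,n)\le \tfrac n2+O(\sqrt n)$ for every $n$ (a concrete error term rather than $(\tfrac12+\varepsilon)n$ for large $n$), at the cost of having to verify the blow-up reachability claim; the paper's is shorter and uniform with its other limit arguments, at the cost of being non-constructive and only asymptotic. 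Your choice $m=\Theta(\sqrt n)$ correctly balances the $O(n/m)$ fiber-size and $O(m)$ rounding contributions, and the lower bound $\liminf f(k,\ell,n)/n\ge\tfrac12$ is the same maximum out-degree observation as in the paper.
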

\begin{proof}
	Clearly, $\liminf_{n \rightarrow \infty} f(k,\ell,n)/n \ge \frac{1}{2}$ (recall, there is always a vertex with out-degree at least $(n-1)/2$).
	We must prove that for every $\varepsilon > 0$, it holds for all sufficiently large $n$ that
	$f(k,\ell,n) \le (\frac{1}{2}+\varepsilon)n$, as this will prove that
	$\limsup_{n \rightarrow \infty} f(k,\ell,n)/n \le \frac{1}{2}+\varepsilon$ and hence the claimed limit exists
	and equals $\frac{1}{2}$. Since $f(k,\ell,n) \le f(3,\ell,n)$, it suffices to prove this for $k=3$.
	
	Let $\varepsilon > 0$ and let $r$ be the smallest integer such that $3^r \ge n$. Consider the tournament $C^r$ from the proof of Lemma \ref{l:3}. It is a $3$-edge colored
	regular tournament with $3^r$ vertices, and by construction, each vertex monochromatically dominates its out-neighbors and no other vertex. Now, take a random subtournament $T$ of order $n$ of $C^r$. The expected out-degree of a vertex of $T$ is precisely $(n-1)/2$ and is hypergeometrically distributed
	$H(3^r-1,(3^r-1)/2,n-1)$. Hence, since $n \ge 3^r/3$ we have that for all
	sufficiently large $n$, with positive probability, $T$ has maximum out-degree at most $(1+\varepsilon)n/2$.
	As every vertex of $T$ monochromatically dominates only its out neighbors, we have that
	$f(3,\ell,n) \le (\varepsilon+\frac{1}{2})n$  for all sufficiently large $n$, as required.
\end{proof}
	
\begin{lemma}\label{l:limit-2}
	Assuming CH, for all $\ell \ge 2$ it holds that $f(2,\ell)$ exists and equals $\frac{\ell}{\ell+1}$.
\end{lemma}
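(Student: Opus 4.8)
The plan is to combine the two one-sided bounds that are already essentially in hand. Lemma~\ref{l:ch} gives, unconditionally on the constructive side but assuming CH on the lower-bound side, that $f(2,\ell,n) \ge 1 + \lfloor \frac{\ell}{\ell+1} n \rfloor$ for every $n$, hence $\liminf_{n\to\infty} f(2,\ell,n)/n \ge \frac{\ell}{\ell+1}$. For the matching upper bound, Lemma~\ref{l:lex} gives $f(2,\ell,n) \le \frac{1+\ell n}{\ell+1}$ whenever $n$ is a power of $\ell+2$, so along that subsequence $f(2,\ell,n)/n \to \frac{\ell}{\ell+1}$ from above. The only remaining issue is that Lemma~\ref{l:lex} is stated only for $n$ a power of $\ell+2$, so I need a monotonicity/interpolation argument to conclude $\limsup_{n\to\infty} f(2,\ell,n)/n \le \frac{\ell}{\ell+1}$ for all $n$, and then the limit exists and equals $\frac{\ell}{\ell+1}$.

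First I would establish the elementary monotonicity fact that $f(2,\ell,n)$ does not increase too fast in $n$: given an extremal $N$-vertex tournament $T$ realizing $f(2,\ell,N)$ with $N \ge n$, deleting any $N-n$ vertices yields an $n$-vertex tournament $T'$ whose induced $2$-edge-coloring has no path-monochromatic depth-$\le\ell$ arborescence on more than $f(2,\ell,N)$ vertices (deleting vertices only destroys arborescences, never creates larger ones), so $f(2,\ell,n) \le f(2,\ell,N)$. Thus $f(2,\ell,n)$ is nondecreasing in $n$. Now, given arbitrary large $n$, pick $r$ with $(\ell+2)^{r-1} < n \le (\ell+2)^r =: N$; monotonicity gives $f(2,\ell,n) \le f(2,\ell,N) \le \frac{1+\ell N}{\ell+1} < \frac{1+\ell(\ell+2)n}{\ell+1}$, and dividing by $n$ yields $\limsup_n f(2,\ell,n)/n \le \frac{\ell(\ell+2)}{\ell+1}$, which is the wrong constant — the crude jump of a full factor $\ell+2$ is far too lossy. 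So the genuine work is to show the upper bound transfers to \emph{all} $n$ with the \emph{correct} constant, not merely up to a multiplicative loss.

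The fix is to build, for every $n$, a tournament witnessing $f(2,\ell,n) \le \frac{\ell}{\ell+1}n + O(1)$ directly, by mimicking the construction in Lemma~\ref{l:lex} but allowing an unbalanced lexicographic product. Concretely, I would take $R_{\ell+2}$ and substitute into each of its $\ell+2$ vertices a tournament on roughly $n/(\ell+2)$ vertices — more precisely, recursively build a tournament $T_n$ by writing $n = \sum_{i=1}^{\ell+2} n_i$ with the $n_i$ as equal as possible, placing $T_{n_i}$ at vertex $i$ of $R_{\ell+2}$, and extending the red-path/blue-back coloring as in the product coloring (inner edges get the color of the host $T_{n_i}$; outer edges get the color of the corresponding edge of $R_{\ell+2}$). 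The same case analysis as in Lemma~\ref{l:lex} — a vertex in block~$1$ cannot reach block~$\ell+2$ within depth $\ell$ since that needs $\ell+1$ red outer edges; a vertex in block $i>1$ cannot reach block $i-1$ monochromatically at all; and a monochromatic path staying within a block is governed recursively, while a path using two outer edges is bichromatic — shows that any vertex fails to reach at least one whole block, i.e. at least $\lfloor n/(\ell+2)\rfloor$ vertices, plus recursively at most the per-block bound. Setting $g(n)$ for the resulting upper bound, one gets a recursion of the shape $g(n) \le \max_i g(n_i) + n - 2\lfloor n/(\ell+2)\rfloor + O(1)$ with $\sum n_i = n$ and each $n_i \le \lceil n/(\ell+2)\rceil$, which unwinds to $g(n) \le \frac{\ell}{\ell+1}n + O(\log n)$ — matching Lemma~\ref{l:ch} up to lower-order terms. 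I expect this unbalanced-product construction and its recursion to be the main obstacle: one must check that the "fails to reach a full block" estimate survives when the blocks have unequal (and possibly not power-of-$(\ell+2)$) sizes, and that no monochromatic depth-$\le\ell$ path can sneak across block boundaries in more than the permitted way. Granting that, combining with the CH lower bound of Lemma~\ref{l:ch} shows $f(2,\ell,n)/n \to \frac{\ell}{\ell+1}$, completing the lemma.
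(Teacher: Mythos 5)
Your proposal is correct, and it reaches the upper bound for general $n$ by a genuinely different route than the paper. Both arguments take the lower bound straight from Lemma~\ref{l:ch} (this is where CH enters), and both must overcome the fact that Lemma~\ref{l:lex} only covers $n=(\ell+2)^r$; you rightly observe that crude monotonicity is hopeless here. The paper's fix is probabilistic: it takes the smallest $r$ with $(\ell+2)^r\ge n$, keeps the balanced power $R_{\ell+2}^r$ with its power coloring, and passes to a \emph{random} $n$-vertex subtournament, using hypergeometric concentration to show that every vertex still misses a $(1-\varepsilon)\frac{1}{\ell+1}$ fraction of the vertices, giving $f(2,\ell,n)\le(1+\varepsilon)\frac{\ell}{\ell+1}n$ for large $n$. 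Your fix is a deterministic unbalanced recursive blow-up of $R_{\ell+2}$ for every $n$, with the recursion $g(n)\le \max_i g(n_i)+n-2\lfloor n/(\ell+2)\rfloor+O(1)$ unwinding to $\frac{\ell}{\ell+1}n+O(\log n)$; this is essentially a re-proof of Lemma~\ref{l:lex} with unequal block sizes, and the case analysis does survive, since the three structural facts used (no monochromatic path from block $i$ to block $i-1$; reaching block $\ell+2$ from block $1$ requires $\ell+1$ red outer edges; a path returning to its own block projects to a closed walk in $R_{\ell+2}$, which cannot be monochromatic because $R_{\ell+2}$ has no monochromatic cycle) depend only on $R_{\ell+2}$ and the product-style coloring, not on the block sizes. (Your phrase ``a path using two outer edges is bichromatic'' should be stated in that form -- it is only paths returning to their starting block that are forced to be bichromatic.) What each approach buys: yours avoids probability entirely, gives an explicit extremal coloring for every $n$, and yields the sharper quantitative bound $f(2,\ell,n)\le\frac{\ell}{\ell+1}n+O(\log n)$; the paper's is shorter because it reuses Lemma~\ref{l:lex} verbatim as a black box, and the same random-subtournament trick is recycled there for Lemmas~\ref{l:limit} and~\ref{l:limit-3}, where an analogous unbalanced construction would have to be redone separately.
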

\begin{proof}
	By Lemma \ref{l:ch}, $\liminf_{n \rightarrow \infty} f(2,\ell,n)/n \ge \frac{\ell}{\ell+1}$ (here is where we assume CH). 
	Let $\varepsilon > 0$ and let $r$ be the smallest integer such that $(\ell+2)^r \ge n$.
	Consider the tournament $R_{\ell+2}^r$ together with its power coloring.
	As shown in Lemma \ref{l:lex}, we have $f_{R_{\ell+2}^r}(2,\ell)=(1+\ell (\ell+2)^r)/(\ell+1)$.
	So every vertex cannot reach at least $((\ell+2)^r-1)/(\ell+1)$ vertices via monochromatic paths of length at most $\ell$.
	Now, take a random subtournament $T$ of order $n$ of $R_{\ell+2}^r$. For a vertex $v$ of $T$, the expected
	number of unreachable vertices via monochromatic paths of length at most $\ell$ starting at $v$ is
	precisely $(n-1)/(\ell+1)$ and is hypergeometrically distributed $H((\ell+2)^r-1),(\ell+2)^r-1)/(\ell+1),n-1)$. Since $n \ge (\ell+2)^r/(\ell+2)$, we have that for all
	sufficiently large $n$, with positive probability, every vertex of $T$ cannot reach at least
	$(1-\varepsilon)n/(\ell+1)$ vertices via monochromatic paths of length at most $\ell$.
	Thus, $f(2,\ell,n) \le (1+\varepsilon)n\ell/(\ell+1)$ for all sufficiently large $n$. Thus, 
	$\limsup_{n \rightarrow \infty} f(2,\ell,n)/n \le \frac{\ell}{\ell+1}$ and the claimed limit exists.
\end{proof}
	The existence of $f(2,2)$ may be proved independently of CH.
\begin{lemma}\label{l:limit-3}
	$f(2,2)$ exists.
\end{lemma}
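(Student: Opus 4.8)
The goal is to show that the limit $f(2,2) = \lim_{n\to\infty} f(2,2,n)/n$ exists, without assuming CH. Since the sequence $f(2,2,n)/n$ is bounded in $[1/2,2/3]$ (the lower bound is trivial from max out-degree, the upper bound from Lemma~\ref{l:lex} applied along $n = 4^r$), it suffices to prove that it converges, and the natural route is a \emph{supermultiplicativity / subadditivity}-type argument exploiting the lexicographic product. The key structural fact is that the product coloring of $T_1 \circ T_2$ behaves well with respect to monochromatic reachability by short paths: a depth-$2$ path in $T_1 \circ T_2$ projects to something controlled in each factor.

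**Key steps.** First I would make precise how reachability by paths of length $\le 2$ interacts with the lexicographic product under the product coloring. If $v$ monochromatically dominates a set $S$ in $T_1$ using paths of length $\le 2$, and $w$ monochromatically dominates a set $S'$ in $T_2$ using paths of length $\le 2$, one wants to say something like: the vertex $(v,w)$ in $T_1\circ T_2$ dominates roughly $|S|\cdot|S'|$ vertices via paths of length $\le 2$ — but here the depth constraint is the real enemy, because a length-$2$ outer path in $T_1\circ T_2$ already ``uses up'' both allowed edges on the $T_2$-coordinate and leaves nothing for the $T_1$-coordinate. So the clean multiplicative identity one would hope for fails, exactly as the restriction to depth $2$ (rather than unrestricted depth) is what makes $f(2,2)$ subtle. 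Instead, I expect the right move is to run the argument in the other direction: take a near-extremal tournament $T$ on $m$ vertices witnessing $f(2,2,m)/m \le \alpha + \varepsilon$, and build from it, via a blow-up or lexicographic-type construction, tournaments on $n$ vertices (for infinitely many, and then by a random-subtournament sampling argument as in Lemmas~\ref{l:limit} and~\ref{l:limit-2}, for all sufficiently large $n$) with $f(2,2,n)/n \le \alpha + \varepsilon + o(1)$. Combined with the fact that the $R_4^r$ construction pins down $f(2,2,4^r)$ exactly, this controls the $\limsup$.

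**Making the limit exist.** Concretely, I would argue: let $\alpha = \liminf_{n} f(2,2,n)/n$, pick $m$ and a tournament $T$ on $m$ vertices with $f_T(2,2) \le (\alpha+\varepsilon)m$, and consider the blow-up $T[K]$ (replace each vertex by $K$ copies, with edges inside a class oriented arbitrarily/transitively, keeping the coloring on a class monochromatic or handled by a tiny sub-gadget). One checks that in $T[K]$, depth-$2$ monochromatic domination from any vertex is essentially confined to: its own blown-up class, plus the blow-up of what the original vertex dominated in $T$ — so $f_{T[K]}(2,2) \le (\alpha+\varepsilon)mK + O(K)$, giving $f(2,2,mK)/(mK) \le \alpha + \varepsilon + O(1/m)$. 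Then a hypergeometric random-subtournament argument (identical in spirit to the one in Lemma~\ref{l:limit-2}) transfers this bound from the arithmetic progression $\{mK : K \ge 1\}$ to all sufficiently large $n$. This yields $\limsup_n f(2,2,n)/n \le \alpha + \varepsilon + O(1/m)$, and letting $m\to\infty$ and $\varepsilon\to 0$ forces $\limsup \le \liminf$, so the limit exists.

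**Main obstacle.** The hard part is the depth-$2$ bookkeeping in the blow-up (or product): I must verify that replacing vertices by classes does not create new short monochromatic paths that dramatically increase domination — i.e., that a length-$2$ path in $T[K]$ that leaves the starting class and returns, or that chains through two different classes, is still accounted for by the behaviour of the original length-$\le 2$ paths in $T$, up to an additive $O(K)$ (lower-order, after dividing by $mK$) error term. Handling the intra-class edges and their colors cleanly, so that no long-range monochromatic shortcuts are introduced while the within-class contribution stays $O(K)$, is the delicate point; everything else (boundedness, the sampling step) is routine given the earlier lemmas.
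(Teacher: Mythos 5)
Your proposal is correct and is essentially the paper's own argument: the paper likewise takes a near-extremal $2$-colored tournament $Q$ on $q\ge 1+1/\varepsilon$ vertices witnessing the $\liminf$, blows it up via the lexicographic power $Q^r$ with the product coloring (so cross-class depth-$2$ domination is governed by the base coloring, and the absence of digons prevents a path from leaving a class and returning), and then transfers the bound to all $n$ by a hypergeometric random-subtournament step. Your single-level blow-up $T[K]$ with the trivial $O(K)$ intra-class allowance simply replaces the paper's induction on $r$ (which bounds the intra-class part recursively as $1+t(q^{r-1}-1)/(q-1)$), and the ``delicate point'' you flag does go through for exactly the digon-free reason you anticipate, since any length-$\le 2$ monochromatic path to another class projects onto a length-$\le 1$ or monochromatic length-$2$ path of $T$.
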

\begin{proof}
	Let $s = \liminf_{n \rightarrow \infty} f(2,2,n)/n$
	and let $\varepsilon > 0$. By the definition of $\liminf$, there exists
	$q \ge 1+ 1/\varepsilon$ and a tournament $Q$ with $q$ vertices such that
	$t+1 := f_Q(2,2) \le (\varepsilon+s)q$. In particular, there is a red-blue edge coloring of $Q$, denoted by $C(Q)$, such that every path-monochromatic arborescence of depth at most $2$ has at most $t+1$ vertices.
	
	Consider the tournament $Q^r$ and its power coloring with respect to $C(Q)$.
	We claim that this coloring shows that $f_{Q^r}(2,2) \le 1+t(q^r-1)/(q-1)$.
	Proceeding by induction, for $r=1$ this just follows by the fact that $t+1=f_Q(2,2)$.
	As for the induction step, let $(u,v)$ be some vertex of $Q^r$ (recall that $u \in Q^{r-1}$ and
	$v \in Q$). Consider first monochromatic paths of length at most $2$ from $(u,v)$ to
	$(u',v')$ where $v \neq v'$. By the definition of $C(Q)$, there are at most $t q^{r-1}$ vertices reachable by such paths.
	Consider next monochromatic paths of length at most $2$ from $(u,v)$ to some vertex 
	$(u',v)$. Clearly, every inner vertex on such a path must also be of the form $(u'',v)$ as there are no paths of length $2$ at all from $(u,v)$ to $(u',v)$ using vertices of the form $(u'',v')$ with $v' \neq v$ as an inner vertex (as $Q$ is a tournament, it does not have cycles of length $2$). So, by the induction hypothesis applied to
	$Q^{r-1}$, there are at most $1+t(q^{r-1}-1)/(q-1)$ vertices reachable by such paths. Hence,
	$$
	f_{Q^r}(2,2) \le t q^{r-1} + 1+t(q^{r-1}-1)/(q-1) = 1+t(q^r-1)/(q-1)\;.
	$$
	We may now proceed as in Lemma \ref{l:limit-2}: let $r$ be the smallest integer such that $q^r \ge n$.
	Consider the tournament $Q^r$ together with its power coloring with respect to $C(Q)$.
	As $f_{Q^r}(2,2) \le 1+t(q^r-1)/(q-1)$, every vertex cannot reach at least $q^r-1-t(q^r-1)/(q-1)$ vertices via monochromatic paths of length at most $2$.
	Now, take a random subtournament $T$ of order $n$ of $Q^r$. For a vertex $v$ of $T$, the expected
	number of unreachable vertices via monochromatic paths of length at most $2$ starting at $v$ is
	at least $(q^r-1-t(q^r-1)/(q-1))(n-1)/(q^r-1)$ and is hypergeometrically distributed
	$H(q^r-1,z,n-1)$ where $z \ge q^r-1-t(q^r-1)/(q-1)$. Since $n \ge q^r/q$ we have that for all
	sufficiently large $n$, with positive probability, every vertex of $T$ cannot reach at least
	$(1-\varepsilon)(q^r-1-t(q^r-1)/(q-1))(n-1)/(q^r-1)$ vertices via monochromatic paths of length at most $2$.
	Thus, for all sufficiently large $n$, $f(2,2,n) \le n-(1-\varepsilon)(q^r-1-t(q^r-1)/(q-1))(n-1)/(q^r-1)$.
	Finally, recalling that $t+1 \le  (\varepsilon+s)q$ and that $q-1 \ge \frac{1}{\varepsilon}$ we have that
	\begin{align*}
	 f(2,2,n) & \le n-(1-\varepsilon)(q^r-1-t(q^r-1)/(q-1))(n-1)/(q^r-1) \\
	 & = 1+\varepsilon(n-1)+(1-\varepsilon)\frac{t(n-1)}{q-1} \\
	 & \le \varepsilon n + 1 + \frac{tn}{q-1} \\
	 & \le \varepsilon n + 1 +\frac{(\varepsilon+s)qn}{q-1} \\
	 & \le 2\varepsilon n + (1+\varepsilon)(\varepsilon+s)n
	 \end{align*}
     implying that $\limsup_{n \rightarrow \infty} f(2,2,n)/n \le s$.	
\end{proof}
There are quite a few results that come rather close to the bound asserted in CH. Of interest to us is the result obtained by Shen \cite{shen-1998} who proved that if
a directed graph has minimum out-degree at least $(3-\sqrt{7})n$ then it contain a directed triangle.
An immediate corollary of the proof of Lemma \ref{l:ch} where we use $\ell=2$ and use Shen's bound instead of
the bound $\lceil n/3 \rceil$ of CH is that $f(2,2,n) \ge (\sqrt{7}-2)n$ (and this bound is independent of CH).
Now, together with Lemma \ref{l:lex} and Lemma \ref{l:limit-3} we get
\begin{corollary}\label{coro:f22}
	$f(2,2) \in [\sqrt{7}-2,\frac{2}{3}]=[0.645...,0.666...]$.
\end{corollary}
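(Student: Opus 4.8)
The plan is to obtain Corollary \ref{coro:f22} by assembling three ingredients that are already in place: the existence of the limit $f(2,2)$ (Lemma \ref{l:limit-3}), the upper bound coming from the lexicographic power of $R_4$ (Lemma \ref{l:lex} with $\ell=2$), and a lower bound obtained by re-running the argument of Lemma \ref{l:ch} with Shen's triangle theorem substituted for the Caccetta--H\"aggkvist bound. Since the limit $\lim_{n\to\infty} f(2,2,n)/n$ exists, it suffices to control $f(2,2,n)/n$ from above along a suitable subsequence and from below for all $n$.

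For the upper bound $f(2,2)\le\tfrac23$, I would apply Lemma \ref{l:lex} with $\ell=2$, which gives $f(2,2,n)\le (1+2n)/3$ whenever $n$ is a power of $4$. Dividing by $n$ along the subsequence $n=4^r$ and using that the limit exists by Lemma \ref{l:limit-3}, the value of that limit is at most $\lim_{r\to\infty}(1+2\cdot 4^r)/(3\cdot 4^r)=\tfrac23$.

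For the lower bound $f(2,2)\ge\sqrt7-2$, let $T$ be an arbitrary $2$-edge-colored $n$-vertex tournament and form the auxiliary oriented graph $F$ on $V(T)$ exactly as in the proof of Lemma \ref{l:ch} with $\ell=2$: $(u,v)\in E(F)$ iff $u$ cannot reach $v$ by a monochromatic path of length at most $2$. If some vertex $v$ of $F$ has out-degree at most $\lceil(3-\sqrt7)n\rceil-1$, then $v$ is the root of a path-monochromatic arborescence of depth at most $2$ on more than $n-(3-\sqrt7)n=(\sqrt7-2)n$ vertices, and we are done. Otherwise $F$ has minimum out-degree at least $(3-\sqrt7)n$, so by Shen's theorem \cite{shen-1998} $F$ contains a directed triangle; let $X$ be its three vertices. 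Then $T[X]$ is a $2$-edge-colored tournament on $3$ vertices, so by the Sands--Sauer--Woodrow result \cite{SSW-1982} it has a vertex that monochromatically dominates the other two, necessarily via paths of length at most $2$---contradicting the presence of that vertex on a directed triangle of $F$. Hence $f(2,2,n)\ge(\sqrt7-2)n$ for every $n$, and dividing by $n$ and passing to the limit gives $f(2,2)\ge\sqrt7-2$.

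The step requiring the most care---though it is not really an obstacle---is verifying that Shen's threshold $(3-\sqrt7)n$ is exactly the one needed: minimum out-degree at least $(3-\sqrt7)n$ forces a directed triangle, and a triangle is precisely the object to which the two-color Sands--Sauer--Woodrow theorem applies (a tournament on at most $3$ vertices always has a monochromatic depth-$\le 2$ dominating vertex), mirroring the way CH with $r=\lceil n/3\rceil$ produces a cycle of length $\lceil n/\lceil n/3\rceil\rceil=3$ in Lemma \ref{l:ch}. One should also record that this lower-bound argument invokes only Shen's (unconditional) theorem, so the containment $f(2,2)\in[\sqrt7-2,\tfrac23]$ holds independently of CH.
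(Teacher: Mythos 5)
Your proposal is correct and follows essentially the same route as the paper: the lower bound comes from rerunning the argument of Lemma \ref{l:ch} with $\ell=2$ using Shen's triangle theorem in place of CH, and the upper bound combines Lemma \ref{l:lex} (with $\ell=2$, along powers of $4$) with the existence of the limit from Lemma \ref{l:limit-3}. No gaps to report.
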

\begin{proof}[Proof of Theorem \ref{t:1}]
	Item (i) follows from Lemma \ref{l:limit-3} and Corollary \ref{coro:f22}.
	Item (ii) follows from Lemma \ref{l:limit-2}. Item (iii) follows from Lemma \ref{l:limit}.
\end{proof}

\section{Random tournaments}\label{sec:random}

Recall that a random $n$-vertex tournament is the probability space ${\cal T}_n$ of tournaments on vertex set $[n]$ such that the direction of each edge is independently chosen with a fair coin flip.
In this section we prove Theorems \ref{t:2} and \ref{t:3} both of which claim bounds for the values of the random variables $f^*(k,\ell,n)$ and $f^*(k,n)$ that hold whp. Recall that $f^*(k,\ell,n) = f_T(k,\ell)$ and $f^*(k,n)=f_T(k)$ where $T \sim {\cal T}_n$.

A {\em balanced $t$-partition} of $T$ is a partition of its vertex set into $t$ equal parts
$V_1,\ldots,V_t$ (we assume hereafter that $t$ divides $n$ as this assumption has no bearing on the asymptotic claims). For $v \in V(T)$, let $g(v)$ denote the number of out-neighbors of $v$ that belong to the parts not containing $v$. For $T \sim {\cal T}_n$ we have that $g(v)$ is distributed ${\rm Bin}(\frac{t-1}{t}n,\frac{1}{2})$. Thus, with probability at least, say, $1-1/n^2$, $g(v) = (1+o(1))\frac{t-1}{2t}n$.
Also, observe that $T[V_i]$ is a random tournament on $n/t$ vertices, i.e., $T[V_i] \sim {\cal T}_{n/t}$.
The following lemma establishes Theorem \ref{t:2} (b).
\begin{lemma}\label{l:func}
	Let $r_1:=1$ and for $k \ge 2$ let $r_k :=\min_{t=1}^{k-1} \{\frac{t-1}{2t}+\frac{r_{k-t}}{t}\}$.
	Then with probability $1-o(1)$, $f^*(k,n)/n-o(1) \le r_k$. In particular, Theorem \ref{t:2} (b) holds.
\end{lemma}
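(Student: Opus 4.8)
The plan is to exhibit, for a typical $T\sim{\cal T}_n$, one $k$-edge coloring in which every vertex monochromatically dominates at most $(r_k+o(1))n$ other vertices. Since the root of a path-monochromatic arborescence monochromatically dominates all of its vertices, this forces $f^*(k,n)=f_T(k)\le 1+(r_k+o(1))n$, which is the first assertion of the lemma. The coloring is built recursively, mirroring the recursion defining $r_k$: at the top level fix an index $t\in\{1,\dots,k-1\}$ attaining the minimum in the definition of $r_k$, take a balanced $t$-partition $V_1,\dots,V_t$, reserve $t$ of the colors $c_1,\dots,c_t$ for the outer edges and the remaining $k-t$ colors for the inner edges (the same pool of $k-t$ colors in every part), color every outer edge whose head lies in $V_j$ by $c_j$ (so the color records only the destination part), and color each $T[V_i]\sim{\cal T}_{n/t}$ recursively with its $k-t$ colors; a sub-tournament that is to receive a single color is colored monochromatically.

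The crux is that a monochromatic path in color $c_j$ has length at most $1$: traversing a $c_j$-edge lands one inside $V_j$, and no $c_j$-edge leaves $V_j$, since that would be an outer edge into $V_j$ whose tail is already in $V_j$. Moreover an inner color of $T[V_i]$ appears only on edges within $V_i$, so a monochromatic path in such a color stays inside $T[V_i]$. Hence, for $v\in V_i$, every monochromatic path out of $v$ is either a single outer edge---reaching one out-neighbor of $v$ outside $V_i$---or lies inside $T[V_i]$ using only inner colors; so the number of vertices monochromatically dominated by $v$ is at most $g(v)$ (the number of out-neighbors of $v$ outside its part) plus the number it monochromatically dominates inside $T[V_i]$.

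Since $k$ is fixed and the number of available colors drops by at least $1$ at each recursive step, the recursion has depth at most $k-1$; consequently only $O_k(1)$ sub-tournaments arise, each on $\Omega_k(n)$ vertices, and each vertex of $T$ lies in $O_k(1)$ of them. A union bound over the $O_k(n)$ events that assert the concentration $g_{T'}(v)=(1+o(1))\frac{t'-1}{2t'}|V(T')|$ (one for each vertex $v$ of each sub-tournament $T'$ of the recursion, relative to $T'$'s own $t'$-partition, each failing with probability $O_k(1/n^2)$ since $g_{T'}(v)$ is binomial) shows that whp all these concentrations hold. Conditioned on that, one proves by induction on the number $k'$ of colors that in every sub-tournament $T'$ on $m$ vertices produced by the recursion, every vertex monochromatically dominates at most $(r_{k'}+o(1))m$ vertices: for $k'=1$ this is the trivial bound $m-1<r_1m$, and for $k'\ge2$ the dichotomy above, together with $g(v)\le(1+o(1))\tfrac{t-1}{2t}m$ from the good event and the inductive estimate $(r_{k'-t}+o(1))\tfrac mt$ for the relevant part, gives at most $\bigl(\tfrac{t-1}{2t}+\tfrac{r_{k'-t}}{t}+o(1)\bigr)m=(r_{k'}+o(1))m$ by the choice of $t$. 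Taking $k'=k$ and $m=n$ gives the first assertion; divisibility of the sizes by the various $t$'s is irrelevant, since nearly-balanced partitions change every count by $O(1)$.

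Finally, to obtain Theorem \ref{t:2}(b) one checks $r_k\le R_k$ for every $k\ge3$, where $R_k$ denotes the right-hand side of \eqref{e:t2}. Directly from the recursion, $r_3=\tfrac34$, $r_4=\tfrac23$ and $r_5=\tfrac58$, which equal $R_3,R_4,R_5$. For $k\ge6$ the choice $t=3$ (legal since $3\le k-1$) gives $r_k\le\tfrac13+\tfrac13 r_{k-3}$, and as $k-3\ge3$ we may assume inductively $r_{k-3}\le R_{k-3}$; a one-line computation in each residue class modulo $3$ shows $\tfrac13+\tfrac13 R_{k-3}=R_k$, hence $r_k\le R_k$. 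This gives Theorem \ref{t:2}(b). The one genuinely creative step is finding the ``color by destination part'' rule, after which the structural claim that $c_j$-colored paths have length $\le1$ is immediate; the main chore is the probabilistic bookkeeping---keeping the accumulated $o(1)$ uniform over the constantly many recursion levels, and cleanly separating the single union bound from the otherwise deterministic inductive estimate.
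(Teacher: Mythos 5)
Your proposal is correct and follows essentially the same route as the paper: a balanced $t$-partition with part-indexed outer colors (you color by destination part, the paper by source part — an immaterial mirror image), recursive coloring of the parts with the remaining $k-t$ colors, concentration of the cross-degrees $g(v)$, and induction matching the recursion for $r_k$, followed by the same numerical verification of \eqref{e:t2} (your uniform choice $t=3$ gives the same values as the paper's $t=2$/$t=3$ split). The only cosmetic difference is organizational: you fix the whole recursive partition in advance and do one union bound before a deterministic induction, whereas the paper folds the whp statement into the induction hypothesis.
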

\begin{proof}
	We prove the first part of the lemma by induction on $k$ where the cases $k=1$ and $k=2$ are trivial (notice that $r_2=1$).
	Assume $k \ge 3$ and let $T \sim {\mathcal T}_n$. For $1 \le t \le k-1$, consider a balanced $t$-partition of $T$ into parts $V_1,\ldots,V_t$. Using the color palette $[k]$, we color the edges
	of $T$ as follows. The $g(v)$ edges connecting $v \in V_j$ to its out-neighbors in other parts are colored
	$j$. This leaves the $k-t$ colors $\{k-t+1,\ldots,k\}$ yet unused. For each $1 \le j \le t$, the sub-tournament $T[V_j]$ is edge-colored with these $k-t$ colors so as to minimize the largest size of a
	path-monochromatic arborescence. Now consider some vertex $v \in V_j$. By our coloring there is no monochromatic path from $v$ to any $u$ such that $u$ is in another part and $u$ is not an out-neighbor of $v$.
	Hence, $v$ monochromatically dominates only $g(v)$ vertices which are not in its part.
	Furthermore, by the coloring of $T[V_j]$, $v$ monochromatically dominates only $f_{T[V_j]}(k-t)$
vertices in its own part (notice that $v$ cannot use vertices outside its part to monochromatically reach
vertices in its part).  But recall that $g(v)=(1+o(1))\frac{t-1}{2t}n$ with probability at least $1-1/n^2$
so for all $v \in V(T)$ it holds with probability at least $1-1/n$ that $g(v)=(1+o(1))\frac{t-1}{2t}n$.
Furthermore, by the induction hypothesis, since $T[V_j] \sim {\mathcal T}_{n/t}$ it holds that
$f_{T[V_j]}(k-t)/(n/t) - o(1)  \le r_{k-t}$ with probability $1-o(1)$. Hence we have that with probability at least $1-1/n-o(t)=1-o(1)$ it holds that $f_T(k) \le (1+o(1))\frac{t-1}{2t}n + \frac{r_{k-t}n}{t}+o(n)$.
choosing $t$ which minimizes $\frac{t-1}{2t}+\frac{r_{k-t}}{t}$, the induction claim holds.

For the second part of the lemma, it remains to prove that for each $k \ge 3$, $r_k$ is at most
the value given in \eqref{e:t2}.
For $k=3$ we have $r_3 = \min\{1,\frac{3}{4}\}=\frac{3}{4}$.
For $k=4$ we have $r_4 = \min\{\frac{3}{4}, \frac{3}{4}, \frac{2}{3} \} = \frac{2}{3}$.
For $k=5$ we have $r_5 = \min\{\frac{2}{3}, \frac{5}{8}, \frac{2}{3}, \frac{5}{8} \} = \frac{5}{8}$.
All of these bounds match the ones given in \eqref{e:t2}.
Proceeding inductively, we assume that the claim holds for all values smaller than $k$ and prove for $k \ge 6$.
Assume first that $k \equiv 0 \bmod 3$. Using $t=2$ we have $r_k \le \frac{1}{4}+ r_{k-2}/2$
so by the induction hypothesis,
$$
r_k \le \frac{1}{4}+\frac{1}{2}\left(\frac{1}{2}+\frac{1}{2\cdot 3^{(k-3)/3}}\right) = \frac{1}{2}+\frac{1}{4 \cdot 3^{k/3-1}}\;.
$$
Assume next that $k \equiv 1 \bmod 3$. Using $t=3$ we have $r_k \le \frac{1}{3}+ r_{k-3}/3$
so by the induction hypothesis,
$$
r_k \le \frac{1}{3}+\frac{1}{3}\left(\frac{1}{2}+\frac{1}{2\cdot 3^{(k-4)/3}}\right) = \frac{1}{2}+\frac{1}{2 \cdot 3^{(k-1)/3}}\;.
$$
Finally, assume that $k \equiv 2 \bmod 3$. Using $t=3$ we have $r_k \le \frac{1}{3}+ r_{k-3}/3$
so by the induction hypothesis,
$$
r_k \le \frac{1}{3}+\frac{1}{3}\left(\frac{1}{2}+\frac{1}{8\cdot 3^{(k-8)/3}}\right) = \frac{1}{2}+\frac{1}{8 \cdot 3^{(k-5)/3}}\;.
$$	
\end{proof}
A similar idea as the one in Lemma \ref{l:func} may be used to prove Theorem \ref{t:3} (b).
\begin{proof}[Proof of Theorem \ref{t:3} (b).]
	We prove that $f^*(2,2,n)/n-o(1) \le \frac{3}{4}$ whp.
	Let $T \sim {\mathcal T}_n$. Consider a balanced $2$-partition of $T$ into parts $V_1,V_2$.
	Color the edges with endpoints in distinct parts red and color the edges with both endpoints in the same part blue. Notice that if $u$ and $v$ are in distinct parts and $v$ is not an out-neighbor of $u$, then $u$
	cannot reach $v$ via a monochromatic path of length $2$ (it may reach it via a monochromatic path of longer length). Hence, since whp each vertex $u$ has $(1-o(1))n/4$ non out-neighbors in the opposite part,
	we have that $f_T(2,2) \le 3n/4+o(n)$ whp.
\end{proof}

We now turn to proving Theorem \ref{t:2} (a). Let $\alpha=\alpha(k)$ be a positive constant to be set later. We need the following lemma whose proof immediately follows by applying Chernoff's inequality, hence omitted.
\begin{lemma}\label{l:bipartite}
	Let $T \sim {\mathcal T}_n$. Whp, the following holds for all ordered pairs of disjoint sets of vertices $A,B$ of $T$ of order $\alpha n$ each: The number of edges from $A$ to $B$ is at least
	$\alpha^2 n^2/2 - o(n^2)$. \qed
\end{lemma}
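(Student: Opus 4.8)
The plan is the standard ``first moment $+$ Chernoff $+$ union bound'' argument, which is why the paper omits the details.

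\textbf{Step 1: a single pair.} First I would fix one ordered pair $(A,B)$ of disjoint vertex sets with $|A|=|B|=\alpha n$. Since $A$ and $B$ are disjoint, the orientations of the $|A|\,|B|=\alpha^2 n^2$ potential edges with one endpoint in $A$ and the other in $B$ are mutually independent fair coin flips, and each such edge points from $A$ to $B$ with probability $1/2$. Hence the number $e(A,B)$ of edges directed from $A$ to $B$ is distributed $\mathrm{Bin}(\alpha^2 n^2,\tfrac12)$, with expectation $\alpha^2 n^2/2$. (Note that the orientations of edges inside $A$, inside $B$, or incident to $V(T)\setminus(A\cup B)$ are irrelevant, so there is genuine independence here.)

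\textbf{Step 2: the lower tail.} I would then apply a Chernoff (Hoeffding) bound to the lower tail of this binomial: for every $\lambda>0$,
$$
\Pr\!\left[\, e(A,B) < \tfrac12\alpha^2 n^2 - \lambda \,\right] \;\le\; \exp\!\left(-\frac{2\lambda^2}{\alpha^2 n^2}\right).
$$

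\textbf{Step 3: union bound.} The number of ordered pairs of disjoint $\alpha n$-subsets of $V(T)$ is at most $\binom{n}{\alpha n}^2 \le 4^n = e^{(\ln 4)n}$. Taking, say, $\lambda = n^{5/3}$ — indeed any $\lambda$ with $\lambda = o(n^2)$ and $\lambda^2/n^3 \to \infty$ works — the per-pair failure probability is $\exp(-\Theta(n^{4/3}))$, which dominates the $4^n$ factor since $n^{4/3}/n \to \infty$ and $\alpha=\alpha(k)$ is a fixed constant with $\alpha^2$ bounded away from $0$. Thus
$$
\Pr\!\left[\, \exists\, (A,B):\ e(A,B) < \tfrac12\alpha^2 n^2 - \lambda \,\right] \;\le\; 4^n\exp\!\left(-\Theta(n^{4/3})\right) \;\longrightarrow\; 0,
$$
and since $\lambda = o(n^2)$ this is precisely the asserted statement.

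\textbf{Main obstacle.} There is essentially no obstacle; the only point requiring a moment's care is choosing the deviation $\lambda$ in the ``sweet spot'' — large enough that $e^{-\Theta(\lambda^2/n^2)}$ beats the $4^n$ from the union bound over all set pairs, yet small enough ($\lambda=o(n^2)$) that the bound $\tfrac12\alpha^2 n^2 - \lambda$ still reads as $\tfrac12\alpha^2 n^2 - o(n^2)$. Any polynomial rate strictly between $n^{3/2}$ and $n^2$ does the job.
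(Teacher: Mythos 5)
Your proof is correct and is exactly the argument the paper has in mind when it says the lemma ``immediately follows by applying Chernoff's inequality'': for each fixed disjoint pair the $\alpha^2 n^2$ relevant edge orientations are independent fair coins, a Hoeffding lower-tail bound with deviation $\lambda=n^{5/3}$ gives failure probability $e^{-\Theta(n^{4/3})}$, and this beats the union bound over at most $4^n$ ordered pairs. No gaps; the choice of $\lambda$ in the window between $n^{3/2}$ and $n^2$ is handled correctly.
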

We hereafter assume that $T \sim {\mathcal T}_n$ satisfies the property in Lemma \ref{l:bipartite}
and also satisfies the property that the out-degree and in-degree of each vertex is $n/2-o(n)$ (observe that the latter property also holds whp as the out-degree and in-degree are each distributed ${\rm Bin}(n-1,\frac{1}{2})$).

\begin{proof}[Proof of Theorem \ref{t:2} (a).]
	Consider first the case where there is some vertex $v \in V(T)$ for which
	there is some color $c$ such that at least $\alpha n$ edges entering $v$ are colored $c$ and at least
	$\alpha n$ edges emanating from $v$ are colored $c$. Let $A$ denote a set of $\alpha n$ out-neighbors of $v$ such that for each $u \in A$, $(v,u)$ is colored $c$ and let $B$ denote a set of $\alpha n$ in-neighbors of $v$ such that for each $w \in B$, $(w,v)$ is colored $c$.
	By Lemma \ref{l:bipartite}, there are at least $\alpha^2 n^2/2 - o(n^2)$ edges going from $A$ to $B$.
	This means that there are at least $\alpha^2 n^2/2 - o(n^2)$ directed triangles of the form $(w,v,u)$
	where $(w,v)$ is colored $c$, $(v,u)$ is colored $c$ and $(u,w)$ is an edge (we assume nothing about its color). But then by pigeonhole, there is some specific $w \in B$ and at least $\alpha n/2-o(n)$ directed triangles of  the form $(w,v,u)$ where $u \in A$ and $(u,w)$ is an edge. Each such $u$ is not an out-neighbor of $w$ (it is an in-neighbor of $w$) but still reachable from $w$ by a monochromatic path of length $2$ 
	(namely the path $w,v,u$ both of whose edges are colored $c$). Thus, $w$ monochromatically dominates at
	least $n/2+\alpha n/2 -o(n)$ vertices, so $f_T(k,2) \ge n/2+\alpha n/2 -o(n)$.
	
	Consider next the case that for each $v \in V(T)$ and for each color $c$, if at least $\alpha n$ edges entering $v$ are colored $c$, then fewer than $\alpha n$ edges emanating from $v$ are colored $c$.
	Let the {\em signature} of a vertex $v$ be the subset $S \subseteq [k]$ such that for each $c\in S$, at least
	$\alpha n$ edges entering $v$ are colored $c$. Assuming $\alpha < 1/(2k)$, we have that $S \neq \emptyset$.
	As there are less than $2^k$ possible signatures, let $U \subseteq V(T)$ be a set of $n/2^k$
	vertices, all having the same signature $S$. Consider the subtournament $T[U]$, which has $(1-o(1))n^2/2^{2k+1}$ edges. As there are only $k$ colors, there is a color $c$ appearing in at least
	$(1-o(1))n^2/k2^{2k+1}$ of these edges. Now suppose that $\alpha < 1/k2^{k+1}$. Then, there is some vertex
	of $U$ which has at least $\alpha n$ edges colored $c$ entering it, so $c \in S$.
	Likewise, there is some vertex $u \in U$ having at least $\alpha n$ edges colored $c$ emanating from it.
	But since all vertices of $U$ share the same signature, there are also at least $\alpha n$ edges colored $c$ entering $u$, contradicting our assumption.
	
	Setting $c_k = \alpha/3$ we have shown that $f_T(k,2) \ge n/2+c_k n$ whp, so $f^*(k,2,n) \ge (c_k+\frac{1}{2})n$ whp, as required.
\end{proof}
Observe that by the proof of the last lemma we may take $c_k$ larger than $1/k2^{k+3}$.
On the other hand, Theorem \ref{t:2} (b) (i.e., Lemma \ref{l:func}) shows that we cannot take $c_k$
to be sub-exponential in $k$ (even without depth restriction). Moreover, it shows that the base in the exponent of $k$ in the definition of $c_k$ cannot be improved to smaller than $3^{1/3}$.

Finally, we turn to proving Theorem \ref{t:3} (a). To this end, we first need the following
result of Frankl and R\"odl on hypergraph matchings  \cite{FR-1985}. Recall that a $k$-uniform hypergraph is a collection of $k$-sets (the edges) of some $m$-set (the vertices). The {\em degree} $d(x)$ of a vertex $x$ in a hypergraph is the number of edges containing $x$ and the {\em co-degree} of a pair of distinct vertices $x,y$ is the number of edges containing both. A {\em matching} in a hypergraph is a set of pairwise disjoint edges.
\begin{lemma}[Frank and R\"odl \cite{FR-1985}]\label{l:fr}
	For an integer $k \geq 3$ and a real $\gamma > 0$ there exists a real $\beta=\beta(k,\gamma)$ so that the following holds:
	If a $k$-uniform hypergraph $H$ on $m$ vertices has the following properties for some $t$:\\
	(i) $(1-\beta)t < d(x) < (1+\beta)t$ holds for all vertices;\\
	(ii) $d(x,y) < \beta t$ for all distinct $x$ and $y$;\\
	then the hypergraph has a matching covering at least $m(1-\gamma)$ vertices. \qed
\end{lemma}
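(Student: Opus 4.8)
The plan is to establish this via the R\"odl nibble (the semi-random method), which is the canonical route to such near-perfect matching theorems; I will treat $t$ as large, the only regime in which the hypotheses are nontrivial. Fix a small constant $\epsilon=\epsilon(k,\gamma)>0$ and run the argument in rounds, maintaining at the start of round $i$ the invariant that the uncovered vertices carry a $k$-uniform hypergraph $H_i$ which is approximately $t_i$-regular (all degrees within a $(1\pm\beta_i)$ factor of $t_i$) and has all codegrees at most $\beta_i t_i$, where $t_1=t$ and $\beta_1=\beta$. One round proceeds as follows: include each edge of $H_i$ independently with probability $p_i=\epsilon/t_i$; let $M_i$ be the set of chosen edges that intersect no other chosen edge; add $M_i$ to the matching; and delete every vertex covered by $M_i$ together with all edges meeting it, leaving $H_{i+1}$ on the survivors.

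The next step is the first-moment computation. A fixed edge $e$ meets roughly $kt_i$ other edges (the correction from codegrees being negligible), so it lands in $M_i$ with probability $\approx p_i(1-p_i)^{kt_i}\approx \tfrac{\epsilon}{t_i}e^{-k\epsilon}$. Since at most one matching edge can pass through a fixed vertex $x$, the events ``$e\in M_i$'' over $e\ni x$ are disjoint, so $x$ is covered with probability $\approx t_i\cdot \tfrac{\epsilon}{t_i}e^{-k\epsilon}=\epsilon e^{-k\epsilon}=:\delta$; thus each round deletes a $\delta$-fraction of the vertices in expectation. For a surviving vertex $x$, an incident edge survives iff none of its other $k-1$ vertices is covered, and using the small codegree to treat these events as near-independent, the new degree satisfies
\[
  t_{i+1}\;\approx\; t_i(1-\delta)^{k-1},
\]
while the surviving codegree of any pair is at most $\beta_i t_i(1-\delta)^{k-2}$. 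Hence $H_{i+1}$ is again approximately regular, the uncovered fraction contracts by the factor $(1-\delta)$ per round, and the codegree-to-degree ratio grows only by the bounded factor $(1-\delta)^{-1}$ per round.

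To turn these expectations into structural guarantees I would invoke concentration. The number of covered vertices, and each individual new degree and new codegree, are functions of the independent edge-selection indicators that are \emph{near-Lipschitz}: flipping one indicator only alters the status of the boundedly-many selected edges meeting that edge, so each is Lipschitz with constant $O(1)$ off an event of small probability. A concentration inequality for such functions (Talagrand's inequality, or a bounded-differences estimate applied after conditioning away the rare high-influence configurations) then gives deviations of order $o(t_i)$ with probability $1-o(1)$. Taking a union bound over the $m$ vertices and $O(m^2)$ pairs, $H_{i+1}$ inherits the invariant whp with updated parameters $t_{i+1}$ and $\beta_{i+1}=\beta_i(1-\delta)^{-1}+o(1)$. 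I then iterate for $R=R(\epsilon,\gamma)$ rounds, where $R$ is chosen so that $(1-\delta)^R<\gamma/2$; after $R$ rounds fewer than $\gamma m$ vertices remain uncovered. The parameters are fixed in the order $\epsilon\to R\to\beta$: $\epsilon$ is a small absolute constant, $R$ is determined by $\epsilon$ and $\gamma$, and $\beta=\beta(k,\gamma)$ is taken small enough that $\beta(1-\delta)^{-R}$ stays $o(1)$ so the codegree condition survives all $R$ rounds.

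The main obstacle is the propagation of the two invariants across rounds rather than any single step: I must ensure that the $o(t_i)$ errors from concentration, the geometric drift of $\beta_i$, and the rare vertices whose degrees wander outside the $(1\pm\beta_i)$ window (which I simply dump into the uncovered set and charge against $\gamma$) all accumulate to strictly less than $\gamma m$ over all $R$ rounds. This is the delicate bookkeeping that dictates how small $\beta$ must be relative to the round count $R$, and it is exactly why $\beta$ is allowed to depend on both $k$ and $\gamma$. A secondary technical point is justifying the near-independence used in the edge-survival and codegree estimates, which is precisely where hypothesis (ii) on small codegrees is indispensable.
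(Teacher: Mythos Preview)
The paper does not prove this lemma at all: it is quoted verbatim from Frankl and R\"odl \cite{FR-1985} as a black box, with the \qed{} placed immediately after the statement. So there is no ``paper's own proof'' to compare against; the author simply cites the result and moves on to apply it in Corollary~\ref{coro:matching}.

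Your nibble sketch is the standard route to this theorem and is correct at the level of detail you give. The first-moment calculations (survival probability $\approx \epsilon e^{-k\epsilon}$ per round, degree contraction by $(1-\delta)^{k-1}$, codegree-to-degree ratio drifting by $(1-\delta)^{-1}$) are right, and you correctly identify the two real technical issues: concentration of the near-Lipschitz functionals and the bookkeeping of accumulated errors across $R$ rounds. If you were actually writing this up, the one place to be more careful is the claim that flipping one selection indicator changes each functional by $O(1)$: this is only true off a low-probability event (namely, that the flipped edge meets few other selected edges), so you would either condition on that event or use a defect form of bounded differences. You flag this, so the sketch is honest. But for the purposes of the present paper none of this is needed, since the lemma is imported wholesale.
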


Fix some fixed tournament $Q$ on $q \ge 3$ vertices (we shall later specify $Q$). For an $n$-vertex tournament $T$ define the hypergraph $H(T,Q)$  as follows.
Its vertex set are all the $\binom{n}{2}$ edges of $T$ and its edges are all the (edges of the) $Q$-copies in $T$. Notice that $H(T,Q)$ is $\binom{q}{2}$-uniform. Now suppose that $T \sim {\mathcal T}_n$.
The next lemma shows that whp $H(T,Q)$ has a large subhypergraph that satisfies the conditions of Lemma \ref{l:fr}.
\begin{lemma}\label{l:hyp-properties}
	Let $p_Q$ be the probability that a fixed random tournament on $q$ vertices is isomorphic to $Q$.
	Let $T \sim {\mathcal T}_n$. Whp $H(T,Q)$ has an induced subhypergraph on at least $\binom{n}{2}-3n^{1.9}$ vertices with the following properties:\\
	(i) The degree of each vertex of $H'$ is $(1+o(1))p_Q\binom{n-2}{q-2}$\,;\\
	(ii) The co-degree of each pair of vertices is at most $n^{q-3}$.
\end{lemma}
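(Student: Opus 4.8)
The plan is to analyze, for a fixed pair $\{u,v\}\in\binom{[n]}{2}$ viewed as a vertex of $H(T,Q)$ (carrying its orientation in $T$), the random variable $d(\{u,v\})$, defined as the number of $q$-subsets $S\supseteq\{u,v\}$ of $V(T)$ with $T[S]\cong Q$; since in a tournament a hyperedge of $H(T,Q)$ is determined by the vertex set of the copy it comes from, $d(\{u,v\})$ is exactly the degree of $\{u,v\}$ in $H(T,Q)$. Write $d(\{u,v\})=\sum_{W}X_W$, with $W$ ranging over the $(q-2)$-subsets of $V(T)\setminus\{u,v\}$ and $X_W=\mathbf 1[T[\{u,v\}\cup W]\cong Q]$. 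The first step is to compute $\mathbb E[d(\{u,v\})]=\mu:=p_Q\binom{n-2}{q-2}$ (note $p_Q>0$, so $\mu=\Theta(n^{q-2})$). The only point is that conditioning on the orientation of $uv$ does not change the probability that a given $q$-set spans a copy of $Q$: among the $q!/|\mathrm{Aut}(Q)|$ tournaments on $\{u,v\}\cup W$ isomorphic to $Q$, exactly half have $u\to v$, because the symmetric group on $\{u,v\}\cup W$ acts transitively on ordered pairs of its elements; hence $\Pr[X_W=1\mid \text{orientation of }uv]=p_Q$ for every $W$, and linearity of expectation gives the claim.

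The second step is concentration of $d(\{u,v\})$ around $\mu$, \emph{uniformly} over all $\binom n2$ pairs. Here $X_W$ and $X_{W'}$ are independent unless $W\cap W'\neq\emptyset$, and each $W$ meets only $O(n^{q-3})$ other $(q-2)$-sets, so $\mathrm{Var}(d(\{u,v\}))=O(n^{2q-5})=O(\mu^2/n)$. A bare variance bound is not enough (see the last paragraph), so I would instead bound a higher central moment: expanding $\mathbb E[(d(\{u,v\})-\mu)^{2m}]$ and discarding the terms in which some index contributes an independent mean‑zero factor (these vanish), the surviving terms are dominated by those pairing the $2m$ indices into $m$ overlapping pairs whose copies are otherwise vertex‑disjoint, which yields $\mathbb E[(d(\{u,v\})-\mu)^{2m}]=O(n^{(2q-5)m})=O(\mu^{2m}n^{-m})$ for each fixed $m$. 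Taking, say, $m=4$ and threshold $\eta_n=n^{-1/8}$, Markov's inequality gives $\Pr\!\big[\,|d(\{u,v\})-\mu|>\eta_n\mu\,\big]=O(n^{-3})$, so a union bound over the $\binom n2$ pairs shows that with probability $1-o(1)$ \emph{every} vertex of $H(T,Q)$ has degree $(1\pm o(1))\mu$. (Alternatively, Janson's inequality handles the lower tail and Kim--Vu polynomial concentration the upper tail, to the same effect.)

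For the co-degrees there is nothing random to prove. If two vertices of $H(T,Q)$ share a vertex of $T$ they span $3$ vertices, and any copy of $Q$ through both is determined by its remaining $q-3$ vertices, so the co-degree is at most $\binom{n-3}{q-3}\le n^{q-3}$; if they are vertex-disjoint it is at most $\binom{n-4}{q-4}<n^{q-3}$ (and $0$ when $q=3$). Thus property (ii) holds for every pair of vertices of $H(T,Q)$ and every $T$. Combined with the previous step, with probability $1-o(1)$ the hypergraph $H(T,Q)$ itself satisfies (i) and (ii), so one may take $H'=H(T,Q)$, which has $\binom n2\ge\binom n2-3n^{1.9}$ vertices. (If one only proves the weaker second‑moment concentration, one instead deletes the atypical vertices; this is safe only when there are $o(n)$ of them, since removing a set $B$ lowers a surviving vertex's degree by at most $|B|\cdot n^{q-3}$ via the co-degree bound, and one needs this to be $o(\mu)$.)

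The step I expect to be the real obstacle is the uniform concentration in the second paragraph. Chebyshev's inequality gives only an $O(1/n)$ failure probability per pair, hence $\Theta(n)$ — or, within the slack the statement allows, up to $3n^{1.9}$ — potentially atypical vertices, and these cannot simply be removed: a single atypical vertex can lie in as many as $n^{q-3}$ common copies with a given good vertex, so discarding $\Theta(n^{1.9})$ vertices could in principle reduce a good vertex's degree by $\Theta(n^{q-1.1})\gg\mu$, destroying property (i) for the survivors. Shrinking the atypical set to $o(n)$ (or to $\varnothing$) is exactly what forces the higher‑moment estimate — or an exponential‑type inequality — in place of the variance bound; everything else is routine counting.
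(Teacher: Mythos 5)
Your proof is correct, but it takes a genuinely different route from the paper on the one non-trivial step. The paper stops at the second moment: Chebyshev with threshold $n^{q-2.1}$ gives failure probability $\Theta(n^{-0.8})$ per pair, Markov then shows that whp only $O(n^{1.2})$ hypergraph vertices are atypical, and the lemma's deletion allowance is used in an essential way --- the atypical set $F$ is removed together with all $T$-edges incident to ``dangerous'' $T$-vertices (those lying in at least $n^{0.6}$ elements of $F$). The point you flag as the obstacle to this route is exactly what that refinement handles: a removed element of $F$ that is vertex-disjoint from a surviving edge $e$ has co-degree with $e$ only $O(n^{q-4})$, not $n^{q-3}$, so all $|F|\le n^{1.5}$ disjoint removals cost only $O(n^{q-2.5})$; and removed elements sharing an endpoint with $e$ number at most $2n^{0.6}$ precisely because $e$'s endpoints survived the dangerous-vertex purge, costing $O(n^{q-2.4})$; copies through dangerous vertices cost $O(n^{0.9}\cdot n^{q-3})$. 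All of this is $o\bigl(p_Q\binom{n-2}{q-2}\bigr)$, so Chebyshev does suffice and your claim that a higher-moment bound is ``forced'' is not accurate. Your alternative --- a fixed higher central moment ($m=4$, threshold $n^{-1/8}\mu$, failure $O(n^{-3})$, union bound over all $\binom n2$ pairs, then $H'=H(T,Q)$ with no deletion) --- is a valid and standard cluster-expansion argument (the conditioning on the orientation of $uv$ that kills terms with an isolated index is set up correctly in your first paragraph), and it buys a slightly stronger conclusion at the price of a heavier moment computation; the paper's argument buys simplicity (only a variance bound) at the price of the two-stage deletion, which is exactly why the lemma is stated with the $3n^{1.9}$ slack. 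Your expectation and co-degree computations coincide with the paper's.
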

\begin{proof}
	We start with the second assertion, which is not probabilistic. Consider two vertices of $H(T,Q)$, i.e., two distinct edges of $T$, say $(u,v)$ and $(w,z)$. The total number of $q$-subsets of vertices of $T$ that contain both of these edges is $\binom{n-4}{q-4}$ if $\{u,v\} \cap \{w,z\} = \emptyset$ and is
	$\binom{n-3}{q-3}$ if $\{u,v\} \cap \{w,z\} \neq \emptyset$. In any case, we see that the number of $q$-sets
	of vertices containing both $(u,v)$ and $(w,z)$ is less than  $n^{q-3}$ and in particular, the co-degree of $(u,v)$ and $(w,z)$ in $H(T,Q)$, which only counts $q$-sets that induce a copy of $Q$, is less than $n^{q-3}$.
	
	For the first assertion, fix a vertex of $H(T,Q)$, i.e., an edge $e=(u,v)$ of $T$.
	Let $d(e)$ be the random variable corresponding to the degree of $e$ in $H(T,Q)$.
	Let ${\mathcal X}$ be the $q$-sets of vertices of $T$ that contain both $u$ and $v$
	and observe that $|{\mathcal X}|=\binom{n-2}{q-2}$. For $X \in {\mathcal X}$, consider the
	indicator random variable $d(X)$ for the event that $T[X]$ is isomorphic to $Q$.
	We have that $d(e) = \sum_{X \in {\mathcal X}} d(X)$ and clearly
	$d(X)  \sim {\rm  Bernoulli}(p_Q)$, so we obtain that
	$$
	{\mathbb E}[d(e)] = p_Q\binom{n-2}{q-2}\;.
	$$
	We show that $d(e)$ is concentrated by considering its variance. To this end, fix two elements
	of ${\mathcal X}$, say $X$ and $Y$, and consider ${\rm Cov}(d(X),d(Y))$. Notice that as each of $X$ and $Y$ contain both $u$ and $v$, we have that $|X \cap Y| \ge 2$. Now, if $|X \cap Y| \ge 3$ we shall use the trivial bound ${\rm Cov}(d(X),d(Y)) \le 1$ (recall that $d(X)$ and $d(Y)$ are indicators). Suppose now that $|X \cap Y| = 2$, i.e., they to not have common elements other
	than $u$ and $v$. Now, suppose that we are given $d(X)$, i.e., we are given the information whether $T[X]$ is isomorphic to $Q$.
	Moreover, suppose we are even revealed the orientation of the edge connecting the two elements of $X \cap Y$.
	This provides no information as to whether $T[Y]$ is isomorphic to $Q$ (knowing the orientation of a
	single edge of a tournament says nothing about the structure of that tournament).
	In particular, ${\rm Cov}(d(X),d(Y))=0$.
	We now have
	\begin{align*}
		{\rm Var}[d(e)] & \le {\mathbb E}[d(e)]+ 2\sum_{t=2}^{q-1} \sum_{\substack{X,Y \in {\mathcal X}\\ |X \cap Y|=t}} {\rm Cov}(d(X),d(Y))\\
		& = {\mathbb E}[d(e)]+ 2\sum_{t=3}^{q-1} \sum_{\substack{X,Y \in {\mathcal X}\\ |X \cap Y|=t}} {\rm Cov}(d(X),d(Y))\\
		& \le {\mathbb E}[d(e)]+ 2\sum_{t=3}^{q-1} n^{2q-2-t}\\
		& \le {\mathbb E}[d(e)]+ 2qn^{2q-5}\\
		& \le 3qn^{2q-5}
	\end{align*}
	where in the last inequality we have used that $q \ge 3$. We may now apply Chebyshev's inequality and obtain that
	$$
	\Pr\left[\,|d(e) - {\mathbb E}[d(e)] \,| \ge n^{q-2.1} \right] \le \frac{3qn^{2q-5}}{n^{2q-4.2}}= \Theta(n^{-0.8})\;.
	$$
	As $T$ has fewer than $n^2$ edges, we obtain from the last inequality and from Markov's inequality that whp, all but $O(n^{1.2}) < n^{1.5}$ vertices $e$ of $H(T,Q)$ have
	$|d(e) - {\mathbb E}[d(e)]| \le n^{q-2.1}$.
	Consider then the set $F$ of at most $n^{1.5}$ vertices $e$ of $H(T,Q)$ which violate the last inequality.
	For a vertex $v \in V(T)$ we say that $v$ is {\em dangerous} if $v$ is an endpoint of at least $n^{0.6}$
	elements of $F$. So, there are fewer than $2n^{0.9}$ dangerous vertices. Remove from $H(T,Q)$ all elements
	of $F$ and also remove all vertices $e$ of $H(T,Q)$ such that $e$ has an endpoint which is a dangerous vertex of $T$.
	Let $H'$ be the induced subhypergraph of $H(T,Q)$ obtained after the removal.
	As we remove at most $|F|+2n^{1.9}$ vertices from $H(T,Q)$, we have that $H'$ contains at least
	$\binom{n}{2}-3n^{1.9}$ vertices. 
	 Clearly, the co-degree of any two vertices in $H'$ is
	not larger than it is in $H(T,Q)$. By how much might a degree of a vertex $e=(u,v)$ in $H'$ decrease?
	It might belong to a copy of $Q$ which contains a dangerous vertex of $T$, but there are only at most
	$2n^{0.9}n^{q-3} = 2n^{q-2.1}$ such copies. As $u$ and $v$ are non-dangerous, there may additionally be at most $2n^{0.6}$ vertices $(x,y)$ of $H(T,Q)$ where $(x,y) \in F$ and $|\{x,y\} \cap \{u,v\}|=1$.
	But then these may cause a further reduction of at most $2n^{0.6}n^{q-3} < n^{q-2.1}$ in  the degree of $e$.
	Additionally, it may be that $(u,v)$ belongs to copies of $Q$ which contain an element $(x,y)$ of $F$ such that $\{x,y\} \cap \{u,v\}=\emptyset$. But then these may cause a further reduction of at most $n^{1.5}n^{q-4} < n^{q-2.1}$ in the degree of $e$. It now follows that all the vertices of $H'$ 
	have degree 
	${\mathbb E}[d(e)] \pm 5n^{q-2.1}$, which is ${\mathbb E}[d(e)](1+o(1))$.
\end{proof}
\begin{corollary}\label{coro:matching}
	Whp, $T \sim {\mathcal T}_n$ contains $(1-o(1))\binom{n}{2}/\binom{q}{2}$ pairwise edge disjoint copies of $Q$.
\end{corollary}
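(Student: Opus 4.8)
The plan is to read the statement as a hypergraph matching problem and feed the hypergraph $H'$ produced by Lemma~\ref{l:hyp-properties} into the Frankl--R\"odl theorem (Lemma~\ref{l:fr}). A collection of pairwise edge-disjoint copies of $Q$ in $T$ is precisely a matching in $H(T,Q)$, and the number of copies in it equals the number of hyperedges of the matching, which is the number of covered vertices divided by $\binom{q}{2}$ since $H(T,Q)$ is $\binom{q}{2}$-uniform. As $q\ge 3$ we have $\binom q2\ge 3$, so the uniformity is at least $3$ and Lemma~\ref{l:fr} applies in principle; note also that $p_Q$ is a positive constant, so the target degree value below is genuinely of order $n^{q-2}$.

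First I would fix $\varepsilon>0$ (the eventual ``$o(1)$'' will come by letting $\varepsilon\to 0$ at the end) and set $\gamma:=\varepsilon/2$, then let $\beta=\beta(\binom q2,\gamma)$ be the constant supplied by Lemma~\ref{l:fr}. Next I would condition on $T\sim{\mathcal T}_n$ being such that the conclusion of Lemma~\ref{l:hyp-properties} holds, which happens whp; write $H'$ for the induced subhypergraph it provides, with $m:=|V(H')|\ge\binom n2-3n^{1.9}$ vertices, and put $t:=p_Q\binom{n-2}{q-2}=\Theta(n^{q-2})$. Then I would verify the two hypotheses of Lemma~\ref{l:fr}: property~(i) of Lemma~\ref{l:hyp-properties} says every degree in $H'$ equals $(1+o(1))t$, hence lies in $((1-\beta)t,(1+\beta)t)$ once $n$ is large; property~(ii) says every co-degree is at most $n^{q-3}=o(n^{q-2})=o(t)$, hence below $\beta t$ for $n$ large. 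Therefore Lemma~\ref{l:fr} yields a matching in $H'$ covering at least $m(1-\gamma)$ vertices, i.e.\ at least $m(1-\gamma)/\binom q2 \ge (1-\varepsilon)\binom n2/\binom q2$ pairwise edge-disjoint copies of $Q$ in $T$, for all large $n$ (using $m\ge(1-o(1))\binom n2$ and $\gamma=\varepsilon/2$).

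This shows that for every fixed $\varepsilon>0$, whp $T\sim{\mathcal T}_n$ has at least $(1-\varepsilon)\binom n2/\binom q2$ pairwise edge-disjoint copies of $Q$; combined with the trivial upper bound $\binom n2/\binom q2$ (each copy uses $\binom q2$ distinct edges and $T$ has $\binom n2$ edges), this is exactly the assertion $(1-o(1))\binom n2/\binom q2$. Formally one passes from ``for each fixed $\varepsilon$'' to a single $o(1)$ statement by choosing a sequence $\varepsilon_n\to 0$ slowly enough that the corresponding probabilities still tend to $1$, which is possible since each of them does.

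The step requiring the most care --- though it is not really an obstacle --- is lining up the quantifiers: the Frankl--R\"odl constant $\beta$ must be chosen \emph{before} invoking the degree and co-degree estimates, and one must confirm that the error terms hidden in the ``$(1+o(1))$'' of Lemma~\ref{l:hyp-properties}(i) and in the bound $n^{q-3}$ of part~(ii) are eventually smaller than $\beta t$ for that fixed $\beta$; since $t=\Theta(n^{q-2})$ this is immediate. Everything else is bookkeeping.
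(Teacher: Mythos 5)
Your proof is correct and follows the same route as the paper: feed the subhypergraph $H'$ from Lemma~\ref{l:hyp-properties} into the Frankl--R\"odl theorem, noting that the degrees are $(1+o(1))p_Q\binom{n-2}{q-2}=\Theta(n^{q-2})$ while the co-degrees are at most $n^{q-3}=o(t)$, so the hypotheses hold for any fixed $\beta$, and the resulting near-perfect matching corresponds to edge-disjoint copies of $Q$. Your extra care with the order of quantifiers (fixing $\gamma$ and $\beta$ before invoking the estimates, then letting $\varepsilon\to 0$) only makes explicit what the paper leaves implicit.
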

\begin{proof}
	We apply Lemma \ref{l:fr} to the subhypergraph $H'$ of Lemma \ref{l:hyp-properties}. It has
	$m \ge \binom{n}{2}-3n^{1.9}$ vertices, it is $\binom{q}{2}$-regular, the degrees of all vertices are
	$(1+o(1))p_Q\binom{n-2}{q-2}$, while the co-degrees are negligible, as they are only at most $n^{q-3}$.
	Hence, by Lemma \ref{l:fr}, $H'$ has a matching which covers all but $o(n^2)$ vertices of $H'$
	hence all but $o(n^2)$ vertices of $H(T,Q)$. But observe that by the definition of $H(T,Q)$ such a matching corresponds to a set of pairwise edge-disjoint copies of $Q$ in $T$.
\end{proof}

For a tournament $Q$ on $q$ vertices, for a $2$-edge coloring $c$ of it, and for a vertex $v$, let $s(Q,c,v)$
denote the number of vertices (other than $v$) that are monochromatically dominated by $v$ using paths of length at most $2$. Let $s(Q,c)$ be the sum of $s(Q,c,v)$ taken over all vertices, and let $s(Q)$ be the minimum of $s(Q,c)$ taken over all $2$-edge colorings.
As an example, $s(C_3)=4$ as can be seen by the non-monochromatic coloring of $C_3$, while $s(Q)=\binom{q}{2}$ when $Q$ is a transitive tournament.
\begin{lemma}\label{l:sq}
	Let $q \ge 3$ be fixed and let $Q$ be a tournament on $q$ vertices. Then whp,
	$f^*(2,2,n)+o(n) \ge \frac{s(Q)}{q(q-1)}\cdot n$
\end{lemma}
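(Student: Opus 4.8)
The plan is to use the edge-disjoint copies of $Q$ guaranteed by Corollary \ref{coro:matching} as local gadgets and then average. First, for $T \sim \mathcal{T}_n$ I would fix a family $\mathcal{Q}$ of $m := (1-o(1))\binom{n}{2}/\binom{q}{2}$ pairwise edge-disjoint copies of $Q$ in $T$; this exists whp by Corollary \ref{coro:matching}, and I condition on this event once and for all. The first point is purely combinatorial and does not use randomness: since between any two vertices of a tournament there is exactly one edge, two edge-disjoint copies of $Q$ in $T$ can share at most one vertex. Consequently, the copies in $\mathcal{Q}$ that contain a fixed vertex $v$ pairwise intersect in $\{v\}$ only.

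Next I would fix an \emph{arbitrary} $2$-edge coloring $c$ of $T$ (the family $\mathcal{Q}$ was chosen before $c$), the goal being to exhibit one vertex that monochromatically dominates many vertices in $T$ via paths of length at most $2$. For a copy $Q_i \in \mathcal{Q}$ and a vertex $v \in V(Q_i)$, let $S_i(v) \subseteq V(Q_i)\setminus\{v\}$ be the set of vertices that $v$ monochromatically dominates \emph{within} $Q_i$ using paths of length at most $2$ with respect to the inherited coloring $c|_{Q_i}$, so $|S_i(v)| = s(Q_i,c|_{Q_i},v)$. Two facts combine: (a) every monochromatic path of length at most $2$ inside $Q_i$ is also such a path in $T$, so $S_i(v) \subseteq D(v)$, where $D(v)$ is the set of vertices that $v$ monochromatically dominates in $T$ via paths of length at most $2$; and (b) for a fixed $v$ the sets $S_i(v)$, taken over the copies $Q_i$ containing $v$, are pairwise disjoint, since any two such copies meet only in $v$ while $S_i(v)$ avoids $v$. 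Hence $|D(v)| \ge \sum_{i:\, v \in V(Q_i)} |S_i(v)|$.

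Now I would sum over $v \in V(T)$ and switch the order of summation:
\[
\sum_{v \in V(T)} |D(v)| \;\ge\; \sum_{v \in V(T)} \sum_{i:\, v \in V(Q_i)} s(Q_i,c|_{Q_i},v) \;=\; \sum_{i=1}^{m} \sum_{v \in V(Q_i)} s(Q_i,c|_{Q_i},v) \;\ge\; m\cdot s(Q),
\]
where the last inequality applies the definition of $s(Q)$ as the minimum of $\sum_v s(Q,c',v)$ over all $2$-edge colorings $c'$, to each copy $Q_i \cong Q$. Since $m = (1-o(1))\binom{n}{2}/\binom{q}{2} = (1-o(1))\,n(n-1)/(q(q-1))$, averaging over the $n$ vertices produces a vertex $v^\ast$ with $|D(v^\ast)| \ge (1-o(1))\frac{s(Q)}{q(q-1)}n = \frac{s(Q)}{q(q-1)}n - o(n)$. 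Finally, any vertex that monochromatically dominates a set $D$ via paths of length at most $2$ is the root of a path-monochromatic arborescence of depth at most $2$ on vertex set $\{v^\ast\}\cup D$: place the out-neighbours of $v^\ast$ in $D$ at level one, and hang every remaining vertex of $D$ below a fixed length-two witness on one of its monochromatic paths from $v^\ast$ (the colour of that length-two path forces all edges along it to agree, so every directed path in the arborescence is monochromatic). Hence $f_T(2,2) \ge \frac{s(Q)}{q(q-1)}n - o(n)$, and as $c$ was arbitrary this holds whp for every coloring, i.e.\ whp $f^*(2,2,n) + o(n) \ge \frac{s(Q)}{q(q-1)}\cdot n$.

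The one genuine subtlety — and the step to get exactly right — is the disjointness in (b): it rests on the observation that edge-disjoint copies of a fixed tournament meet in at most one vertex, which is precisely what lets the local contributions $s(Q_i,c|_{Q_i},v)$ \emph{add up} rather than merely be bounded by $q-1$ each (a bound which would be far too weak, since $s(Q)\le q(q-1)$). Everything else — the existence of the copy family via Corollary \ref{coro:matching}, the elementary arborescence construction for depth $2$, and the final averaging — is routine.
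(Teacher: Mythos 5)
Your proof is correct and follows essentially the same route as the paper: take the near-perfect family of edge-disjoint copies of $Q$ from Corollary \ref{coro:matching}, lower-bound the total domination count by $s(Q)$ per copy, and average over the $n$ vertices. The only difference is that you spell out two points the paper leaves implicit (edge-disjoint copies of a tournament share at most one vertex, so the local contributions at a fixed vertex are disjoint; and a vertex dominating a set via monochromatic paths of length at most $2$ roots a path-monochromatic arborescence of depth at most $2$), which is a faithful filling-in rather than a different argument.
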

\begin{proof}
	Suppose $T \sim {\mathcal T}_n$. By Corollary \ref{coro:matching}, whp $T$ contains
	$(1-o(1))\binom{n}{2}/\binom{q}{2}$ pairwise edge disjoint copies of $Q$. Consider any $2$-edge coloring
	$c$ of $T$. Restricted to any copy of $Q$ in $T$, we have that $s(Q,c) \ge s(Q)$. Summing over all copies, this contributes at least $(1-o(1))s(Q)\binom{n}{2}/\binom{q}{2}$ to the number of vertices monochromatically dominated by each vertex using paths of length at most $2$, summed over all vertices. Hence, there is a vertex that monochromatically dominates at least $(1-o(1))s(Q)\frac{n-1}{q(q-1)}$ other vertices
	using paths of length at most $2$, whence $f^*(2,2,n)+o(n) \ge \frac{s(Q)}{q(q-1)}\cdot n$.
\end{proof}

Lemma \ref{l:sq} triggers a search for a tournament $Q$ for which $\frac{s(Q)}{q(q-1)}$ is as large as possible.
In particular, we would like this ratio to be larger than $\frac{2}{3}$ in order to show that
$f^*(2,2,n)$ is truly superior (whp) to $f(2,2,n)$, even if the latter assumes CH (recall Theorem \ref{t:1}).
Apparently no tournament of order $8$ or less achieves more than this $2/3$ ratio; for example, see Figure \ref{f:tour-4} showing that $s(Q) \le 8$ for each of the four non-isomorphic tournaments on four vertices.
Fortunately, however, five distinct tournaments on $9$ vertices (of a total of $191536$ tournaments)
have $s(Q)=49$. All of these tournaments are regular (there are $15$ regular tournaments on $9$ vertices, but $10$ of them have $s(Q) \le 48$). The adjacency matrix of one of these five tournaments is depicted in
Table \ref{table:1}. A code computing $s(Q)$ for tournaments up to order $9$ can be obtained from
\url{https://github.com/raphaelyuster/path-monochromatic/blob/master/path-monochromatic.cpp}. 

\begin{figure}[t]
	\includegraphics[scale=0.6,trim=34 375 150 38, clip]{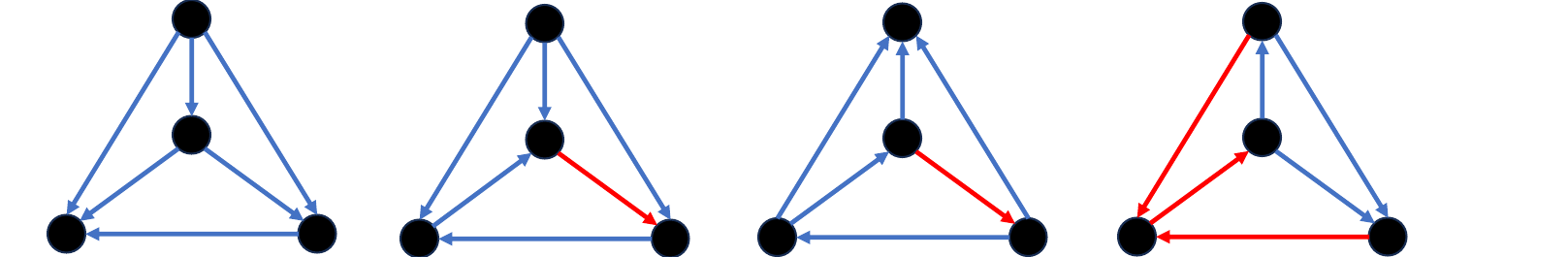}
	\caption{All tournaments $Q$ on four vertices and a red/blue coloring of each, showing that $s(Q) \le 8$ for each.}
	\label{f:tour-4}
\end{figure} 

\begin{table}
	\centering
	\begin{tabular}{ccccccccc}
		$0$ & $1$ & $1$ & $1$ & $1$ & $0$ & $0$ & $0$ & $0$ \\
		$0$ & $0$ & $1$ & $0$ & $1$ & $0$ & $1$ & $0$ & $1$ \\
		$0$ & $0$ & $0$ & $1$ & $1$ & $0$ & $1$ & $1$ & $0$ \\
		$0$ & $1$ & $0$ & $0$ & $0$ & $1$ & $0$ & $1$ & $1$ \\
		$0$ & $0$ & $0$ & $1$ & $0$ & $1$ & $0$ & $1$ & $1$ \\
		$1$ & $1$ & $1$ & $0$ & $0$ & $0$ & $0$ & $0$ & $1$ \\
		$1$ & $0$ & $0$ & $1$ & $1$ & $1$ & $0$ & $0$ & $0$ \\
		$1$ & $1$ & $0$ & $0$ & $0$ & $1$ & $1$ & $0$ & $0$ \\
		$1$ & $0$ & $1$ & $0$ & $0$ & $0$ & $1$ & $1$ & $0$
	\end{tabular} 
	\caption{The adjacency matrix of a tournament $Q$ on $9$ vertices with $s(Q)=49$.
	A value of $1$ in row $i$ column $j$ represents a directed edge from $i$ to $j$.}\label{table:1}
\end{table}  

The following corollary now follows from Lemma \ref{l:sq} and from the tournament whose adjacency matrix is given in Table \ref{table:1}.
\begin{corollary}
	Whp, $f^*(2,2,n)+o(n) \ge \frac{49n}{72}$. \qed
\end{corollary}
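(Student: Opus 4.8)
The plan is to apply Lemma~\ref{l:sq} with the single fixed choice $q=9$ and $Q$ equal to the tournament whose adjacency matrix appears in Table~\ref{table:1}. Granting that $s(Q)=49$ for this particular $Q$, Lemma~\ref{l:sq} immediately yields that whp $f^*(2,2,n)+o(n)\ge \frac{s(Q)}{q(q-1)}\cdot n=\frac{49}{9\cdot 8}\,n=\frac{49}{72}\,n$, which is exactly the claim. So the entire content of the proof is the (finite, deterministic) verification that $s(Q)=49$; note that for the corollary it would even suffice to establish the lower bound $s(Q)\ge 49$.

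First I would check that the $9\times 9$ array in Table~\ref{table:1} really is the adjacency matrix of a tournament: its diagonal is zero and, for every $i\neq j$, exactly one of the $(i,j)$ and $(j,i)$ entries equals $1$. Next, recall how $s(Q,c)$ is computed for a fixed red/blue coloring $c$ of the $\binom{9}{2}=36$ edges of $Q$: an ordered pair $(u,v)$ with $u\ne v$ contributes $1$ exactly when $v$ is monochromatically reachable from $u$ by a path of length at most $2$, i.e.\ when $(u,v)\in E(Q)$, or when there is a vertex $w$ with $(u,w),(w,v)\in E(Q)$ both receiving the same color; then $s(Q,c)$ is the number of such ordered pairs and $s(Q)=\min_c s(Q,c)$. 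Thus the task is to minimize $s(Q,c)$ over all $2^{36}$ colorings.

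The main obstacle is exactly this minimization: a blind enumeration runs over roughly $7\times 10^{10}$ colorings, which is on the edge of feasibility, so the real work is to organize the search efficiently. Two symmetries cut it down for free --- swapping the two colors leaves $s(Q,c)$ unchanged, and any automorphism of $Q$ acts on colorings preserving $s(Q,c)$ --- and a branch-and-bound over partial colorings (pruning a partial coloring as soon as even the most optimistic completion cannot bring $s(Q,c)$ below the best value found so far) makes the computation routine. This is precisely what the code at the cited repository does; I would simply rerun it to confirm that $\min_c s(Q,c)=49$ (and, as a byproduct, that five of the $191536$ tournaments on $9$ vertices attain this value, while none of the four-vertex tournaments has $s(Q)>8$, as recorded in Figure~\ref{f:tour-4}).

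Everything else is already supplied by the preceding lemmas: Corollary~\ref{coro:matching} (via Lemma~\ref{l:fr} and Lemma~\ref{l:hyp-properties}) packs $(1-o(1))\binom{n}{2}/\binom{9}{2}$ pairwise edge-disjoint copies of $Q$ into a random $T$, and Lemma~\ref{l:sq} sums the local domination counts over these copies and averages over the $n$ vertices to produce one vertex that monochromatically dominates at least $(1-o(1))\,s(Q)\,\frac{n-1}{q(q-1)}$ other vertices by paths of length at most $2$. Hence no new probabilistic input is needed beyond what has already been established.
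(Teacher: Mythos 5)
Your proposal is correct and matches the paper's own argument exactly: the corollary is obtained by applying Lemma~\ref{l:sq} with the $9$-vertex tournament of Table~\ref{table:1}, the only substantive content being the finite computational verification that $s(Q)=49$ (carried out by the cited code). No further comment is needed.
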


We have now completed the proofs of both parts of Theorem \ref{t:2} and both parts of Theorem \ref{t:3}.

\subsection*{Acknowledgement}

The author thanks both reviewers for insightful suggestions.

\end{document}